\def\qed{\hfill\ifhmode\unskip\nobreak\fi\quad\ifmmode\Box\else\hfill$\Box$\fi}
\def\ite#1{\hfill\break${}$\hbox to 50pt {\quad(#1)\hfill}}
\newtheorem{thm}{Theorem}[section]
\newtheorem{cor}[thm]{Corollary}
\newtheorem{rem}[thm]{Remark}
\newtheorem{lem}[thm]{Lemma}
\newtheorem{claim}[thm]{Claim}
\def\cA{{\mathcal A}}
\def\cB{{\mathcal B}}
\def\cD{{\mathcal D}}
\def\cE{{\mathcal E}}
\def\cF{{\mathcal F}}
\def\cG{{\mathcal G}}
\def\cH{{\mathcal H}}
\def\cL{{\mathcal L}}
\def\cM{{\mathcal M}}
\def\cK{{\mathcal K}}
\def\cP{{\mathcal P}}
\begin{document}

\title{\vspace{-0.5in}
  Berge cycles in non-uniform hypergraphs }

\author{
{{Zolt\'an F\"uredi}}\thanks{
\footnotesize {Alfr\'ed R\'enyi Institute of Mathematics, Hungary.
E-mail:  \texttt{z-furedi@illinois.edu}. 
Research supported in part by the Hungarian National Research, Development and Innovation Office NKFIH grant KH-130371. 
}}
\and{{Alexandr Kostochka}}\thanks{
\footnotesize {University of Illinois at Urbana--Champaign, Urbana, IL 61801
 and Sobolev Institute of Mathematics, Novosibirsk 630090, Russia. E-mail: \texttt {kostochk@math.uiuc.edu}.
 Research is supported in part by NSF grant  DMS-1600592
and grants 18-01-00353A and 19-01-00682  of the Russian Foundation for Basic Research.
}}
\and{{Ruth Luo}}\thanks{University of California, San Diego, La Jolla, CA 92093, USA. E-mail: {\tt ruluo@ucsd.edu}. Research of this author
is supported in part by NSF grant DMS-1902808.
}}
\date{\today}

\maketitle

\vspace{-0.3in}

\begin{abstract}

We consider two extremal problems for set systems without long Berge cycles. First we give Dirac-type minimum degree conditions that force long Berge cycles. Next we give an upper bound for the number of hyperedges in a hypergraph with bounded circumference. Both results are best possible in infinitely many cases. 

\medskip\noindent
{\bf{Mathematics Subject Classification:}} 05C65, 05C35, 05C38.\\
{\bf{Keywords:}} extremal hypergraph theory, cycles and paths, Tur\'an problem.
\end{abstract}

\section{Introduction}

\subsection{Classical results on longest cycles in graphs}  

The {\em circumference} $c(G)$ of a graph $G$ is the length of its longest cycle.
In particular, if a graph has a cycle $C$ which covers all of its vertices, $V(C)=V(G)$, we say it is {\em hamiltonian}.
A classical result of Dirac states that high minimum degree in a graph forces hamiltonicity.

\begin{thm}[Dirac~\cite{D}]\label{th:D} Let $n \geq 3$, and let $G$ be an $n$-vertex graph with minimum degree $\delta(G)$. 
If $\delta(G) \geq n/2$, then $G$ contains a hamiltonian cycle.  
If $G$ is 2-connected, then  $c(G)\geq \min\{n, 2\delta(G)\}$. 
\end{thm}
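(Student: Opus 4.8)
The plan is to treat both statements with the classical ``longest path / longest cycle'' device. First observe that $\delta(G)\ge n/2$ already forces $G$ to be $2$-connected: if $G$ were disconnected, or had a cutvertex $u$, then the smaller side (of $G$, or of $G-u$) would contain a vertex of degree less than $n/2$. Hence the hamiltonicity claim is precisely the case $\min\{n,2\delta(G)\}=n$ of the circumference bound; nonetheless I would give the short direct argument first, since it is the blueprint for the general case.

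For hamiltonicity, let $P=v_0v_1\cdots v_k$ be a longest path in $G$. By maximality, every neighbour of $v_0$ and of $v_k$ lies on $P$. Put $A=\{i : v_0v_i\in E\}$ and $B=\{i : v_{i-1}v_k\in E\}$, both subsets of $\{1,\dots,k\}$ with $|A|=d(v_0)$ and $|B|=d(v_k)$. Since $k\le n-1$ and $|A|+|B|\ge 2\delta(G)\ge n>k$, we may pick $i\in A\cap B$, and then $C:=v_0v_1\cdots v_{i-1}v_kv_{k-1}\cdots v_iv_0$ is a cycle with $V(C)=V(P)$. If some vertex lay outside $V(P)$, connectedness of $G$ would give an edge from $V(C)$ to it, and replacing an incident edge of $C$ by this edge would yield a path on $k+1$ vertices, contradicting the choice of $P$. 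Thus $V(P)=V(G)$ and $C$ is hamiltonian.

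For the circumference bound, let $G$ be $2$-connected and again take a longest path $P=v_0v_1\cdots v_p$; as before $N(v_0),N(v_p)\subseteq V(P)$, so $p\ge\delta(G)=:\delta$. If the sets $A,B$ defined as above intersect, the same splicing argument produces a cycle on $V(P)$ and then a hamiltonian cycle, so $c(G)=n\ge\min\{n,2\delta\}$ and we are done. Otherwise $A$ and $B$ are disjoint subsets of $\{1,\dots,p\}$, whence $p\ge d(v_0)+d(v_p)\ge 2\delta$, and it remains to locate a cycle of length at least $2\delta$ inside this long path; here the minimum-degree hypothesis alone is not enough (a long path may live in a graph whose longest cycle is short), so $2$-connectivity must enter. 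Let $v_a$ be the neighbour of $v_0$ of largest index and $v_b$ the neighbour of $v_p$ of smallest index, so that $N(v_0)\subseteq\{v_1,\dots,v_a\}$ and $N(v_p)\subseteq\{v_b,\dots,v_{p-1}\}$, giving $a\ge\delta$, $p-b\ge\delta$, and cycles $v_0\cdots v_av_0$ and $v_b\cdots v_pv_b$ via the chords $v_0v_a$ and $v_bv_p$. I would then invoke Menger's theorem to find a second path joining the ``left block'' $\{v_0,\dots,v_a\}$ to the ``right block'' $\{v_b,\dots,v_p\}$ that avoids the segment $v_av_{a+1}\cdots v_b$ of $P$, and stitch this path together with the two chords and the two cycle-arcs. (Equivalently: take a longest cycle $C$; if it misses a vertex, use $2$-connectivity to obtain a path $L$ with both ends on $C$, no interior vertex on $C$, and $|L|$ at most the length of either $C$-arc between its ends, and then run a degree count on the interior of the shorter arc.)

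The \emph{main obstacle} is precisely this last step: arranging that the Menger rerouting closes up into a single cycle of length at least $2\delta$, rather than a theta-graph or a pair of shorter cycles. Carrying it out cleanly requires choosing $P$ (or the longest cycle $C$ together with the bridge $L$) so as to optimize a secondary parameter, followed by a somewhat delicate rotation--extension case analysis; this is the real content of Dirac's bound, while everything preceding it is bookkeeping with maximality and connectivity.
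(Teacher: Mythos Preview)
The paper does not prove Theorem~\ref{th:D}; it is quoted as a classical result of Dirac and used as a tool, so there is no ``paper's own proof'' to compare against. What I can do is assess your proposal on its own merits.

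Your argument for the hamiltonicity half is the standard one and is correct, modulo a harmless indexing slip: the longest path $P=v_0\cdots v_k$ has $k+1$ vertices, so the path you build through the external vertex has $k+2$ vertices (length $k+1$), not ``$k+1$ vertices'' as you wrote. The observation that $\delta\ge n/2$ forces $2$-connectivity is fine and correctly reduces the first claim to a special case of the second.

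For the circumference half, you yourself identify the gap: after reaching the configuration with $a\ge\delta$, $p-b\ge\delta$, you have not shown how to splice the Menger path with the two chords into a \emph{single} cycle of length $\ge 2\delta$. The parenthetical alternative (start from a longest cycle $C$, take a bridge $L$, and do a degree count on the shorter arc) is closer to Dirac's original line, but as stated it is still only a plan: you need to argue that the endpoints of $L$ on $C$, together with the neighbours on $C$ of the endpoints of $P$ (or of the bridge), force one of the two $C$-arcs to have length $\ge 2\delta$, and this step genuinely uses the careful choice of $C$ and $L$. Until that counting is written out, the second statement remains unproved. None of this affects the paper, which simply cites the theorem.
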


Inspired by this theorem, it is common in extremal combinatorics to refer to results in which a minimum degree condition forces some structure as a {\em Dirac-type condition}.
The second part of Theorem~\ref{th:D} cannot be extended to non 2-connected graphs: 
 let $\cF_{n,k}$ be the family of graphs in which each block (inclusion  maximal 2-connected subgraph) of the graph is a copy of $K_{k-1}$. 
Every $F \in \cF_{n,k}$ has minimum degree $k-2$, but its longest cycle has length $k-1$.

\begin{thm}[Erd\H{o}s, Gallai~\cite{EG}]\label{th:EG} Let $G$ be an $n$-vertex graph with no cycle of length $k$ or longer. Then $e(G) \leq \frac{n-1}{k-2}{k-1 \choose 2}$.
\end{thm}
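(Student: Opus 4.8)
The plan is to prove the bound by induction on the number of vertices $n$, after rewriting the target inequality in the more convenient form $e(G)\le \tfrac{(k-1)(n-1)}{2}$; note that $\tfrac{n-1}{k-2}\binom{k-1}{2}=\tfrac{(k-1)(n-1)}{2}$, and in particular $\tfrac{1}{k-2}\binom{k-1}{2}=\tfrac{k-1}{2}$, an identity that will be used to close the induction. We may assume $k\ge 3$, and, by passing to a component, that $G$ is connected. For the base case $n\le k-1$ one simply bounds $e(G)\le\binom n2=\tfrac{n(n-1)}{2}\le\tfrac{(k-1)(n-1)}{2}$, using $n\le k-1$.

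For the inductive step, assume $n\ge k$ and split into two cases according to connectivity. If $G$ has a cut vertex $v$ (which, once we have reduced to connected graphs, is the only remaining non-$2$-connected possibility), write $V(G)=A\cup B$ where $A\cap B=\{v\}$, both parts are proper, and there are no edges between $A\setminus\{v\}$ and $B\setminus\{v\}$; then $e(G)=e(G[A])+e(G[B])$. Each of $G[A]$, $G[B]$ has fewer than $n$ vertices and still has no cycle of length $\ge k$, so the induction hypothesis applies and gives $e(G)\le\tfrac{k-1}{2}\big((|A|-1)+(|B|-1)\big)=\tfrac{k-1}{2}(n-1)$, since $|A|+|B|=n+1$. (The disconnected case is handled identically with $A\cap B=\emptyset$ and $|A|+|B|=n$.)

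The heart of the matter is the $2$-connected case, and here I would invoke the second half of Theorem~\ref{th:D} directly. Since $G$ is $2$-connected with circumference at most $k-1<n$, the inequality $c(G)\ge\min\{n,2\delta(G)\}$ forces $2\delta(G)\le k-1$. Pick a vertex $v$ with $\deg_G(v)=\delta(G)\le\tfrac{k-1}{2}$; then $G-v$ is an $(n-1)$-vertex graph with no cycle of length $\ge k$, so by induction $e(G-v)\le\tfrac{k-1}{2}(n-2)$, and hence
\[
e(G)\le e(G-v)+\deg_G(v)\le \tfrac{k-1}{2}(n-2)+\tfrac{k-1}{2}=\tfrac{k-1}{2}(n-1),
\]
which is exactly the desired bound. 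The only genuine obstacle is this $2$-connected case, and it evaporates once Theorem~\ref{th:D} is available; without it one would have to run the rotation–extension argument on a longest path to locate a low-degree vertex. Everything else is bookkeeping: verifying the edge partition at a cut vertex, the small-$n$ arithmetic, and the identity $\binom{k-1}{2}/(k-2)=(k-1)/2$ that makes the two cases telescope correctly. $\qed$
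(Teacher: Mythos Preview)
Your proof is correct. Note, however, that the paper does not actually prove Theorem~\ref{th:EG}: it is quoted as a classical result of Erd\H{o}s and Gallai and used as background, so there is no ``paper's own proof'' to compare against.

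That said, your argument is a clean and well-known derivation of the Erd\H{o}s--Gallai bound from the circumference half of Dirac's theorem (Theorem~\ref{th:D}), and all the steps check out. The identity $\frac{1}{k-2}\binom{k-1}{2}=\frac{k-1}{2}$ is correct; the base case $n\le k-1$ is immediate; the cut-vertex (or disconnected) case telescopes since $|A|+|B|=n+1$; and in the $2$-connected case with $n\ge k$, the inequality $\min\{n,2\delta(G)\}\le c(G)\le k-1<n$ indeed forces $2\delta(G)\le k-1$, after which deleting a minimum-degree vertex and applying induction closes the argument. The original Erd\H{o}s--Gallai proof proceeds differently (via longest paths rather than via Dirac's circumference bound), but the route you chose is shorter once Theorem~\ref{th:D} is available, which is exactly the toolkit the paper assumes.
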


So the graphs in $\cF_{n,k}$ have the maximum number of edges among the  $n$-vertex graphs with circumference $k-1$. 
They also maximize the number of cliques of any size:

\begin{thm}[Luo~\cite{luo}]\label{cliques} Let $G$ be an $n$-vertex graph with no cycle of length $k$ or longer. Then the number of copies of $K_r$ in $G$ is at most $\frac{n-1}{k-2}{k-1 \choose r}$.
\end{thm}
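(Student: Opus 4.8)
The plan is to prove, by induction on $n$, the reformulated statement: every $n$-vertex graph $G$ with $c(G)\le k-1$ has at most $f(n):=\frac{n-1}{k-2}\binom{k-1}{r}$ copies of $K_r$. The one elementary feature of $f$ that drives everything is its additivity along blocks: $f(a)+f(b)=f(a+b-1)$ for all $a,b$, which is exactly how vertex counts behave when a graph is reassembled from its blocks along cut vertices.

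First the reductions. If $G$ is disconnected, apply the inductive hypothesis to each component $G_i$ (fewer vertices, circumference still at most $k-1$) and sum; since $\sum_i(|V(G_i)|-1)\le n-1$, this yields the bound $f(n)$. If $G$ is connected with a cut vertex, let $B$ be a leaf block of $G$ with cut vertex $v$ and set $G'=G-(V(B)\setminus\{v\})$. Both $B$ and $G'$ have fewer than $n$ vertices and circumference $\le k-1$, and every copy of $K_r$ with $r\ge 2$ lies inside a single block of $G$, hence entirely in $B$ or entirely in $G'$. Thus the number of copies of $K_r$ in $G$ equals (copies in $B$) $+$ (copies in $G'$) $\le f(|V(B)|)+f(|V(G')|)=f(|V(B)|+|V(G')|-1)=f(n)$, using the additivity identity together with $|V(B)|+|V(G')|=n+1$.

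It remains to treat a $2$-connected $G$ on $n$ vertices with $c(G)\le k-1$. If $n\le k-1$ then $G\subseteq K_{k-1}$, so the number of $K_r$'s is at most $\binom{n}{r}$, and since $m\mapsto\binom{m}{r}/(m-1)$ is nondecreasing for $m\ge 2$ we get $\binom{n}{r}\le\frac{n-1}{k-2}\binom{k-1}{r}=f(n)$. If $n\ge k$, then $n\ge k>c(G)\ge\min\{n,2\delta(G)\}$ by the second part of Theorem~\ref{th:D}, forcing $2\delta(G)\le k-1$. Choose a vertex $v$ of minimum degree: every copy of $K_r$ through $v$ is spanned by $r-1$ pairwise-adjacent neighbours of $v$, so there are at most $\binom{\delta(G)}{r-1}\le\binom{\lfloor (k-1)/2\rfloor}{r-1}$ of them, while the copies avoiding $v$ number at most $f(n-1)$ by induction applied to $G-v$ (which has circumference $\le k-1$). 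Hence the count for $G$ is at most $f(n-1)+\binom{\lfloor (k-1)/2\rfloor}{r-1}$, and we are done once we check the purely numerical inequality $\binom{\lfloor (k-1)/2\rfloor}{r-1}\le\frac{1}{k-2}\binom{k-1}{r}=f(n)-f(n-1)$.

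I expect that final binomial inequality to be the main point of friction: it is essentially tight (equality when $r=2$ and $k$ is odd, mirroring Kopylov's extremal $2$-connected configurations), so it cannot be dispatched by crude estimates — a clean route is to count $r$-subsets of a $(k-1)$-set by how they split across a balanced bipartition, or to track the ratio of the two sides as $r$ varies. The remaining obstacles are only bookkeeping: verifying the induction base cases, checking the degenerate leaf blocks $B=K_2$ against $f(2)=\frac{1}{k-2}\binom{k-1}{2}\ge 1$, and confirming the monotonicity of $m\mapsto\binom{m}{r}/(m-1)$. (Alternatively, the $2$-connected case could be handled by invoking Kopylov's structure theorem for $2$-connected graphs of bounded circumference and counting $K_r$'s directly in his extremal graphs, but the minimum-degree argument above keeps the proof self-contained given Theorem~\ref{th:D}.)
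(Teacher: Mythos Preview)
The paper does not prove Theorem~\ref{cliques}; it is quoted from Luo's paper~\cite{luo} and used as a black box in Section~5. So there is no ``paper's own proof'' to compare against.

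That said, your outline is the standard one and is essentially the argument in~\cite{luo}: reduce to the $2$-connected case via the block tree (using the additivity $f(a)+f(b)=f(a+b-1)$), and in the $2$-connected case delete a minimum-degree vertex, invoking Dirac's bound $2\delta(G)\le c(G)\le k-1$ to control the number of $K_r$'s through that vertex. The pieces you flag as needing verification are genuine but routine: the monotonicity of $m\mapsto\binom{m}{r}/(m-1)$ holds for $r\ge 2$ (for $r=1$ it fails, but the theorem as stated is only interesting for $r\ge 2$ anyway), and the crucial inequality
\[
(k-2)\binom{\lfloor (k-1)/2\rfloor}{r-1}\;\le\;\binom{k-1}{r}
\]
is correct with equality exactly when $r=2$ and $k$ is odd; one clean way to see it is to write $\binom{k-1}{r}=\frac{k-1}{r}\binom{k-2}{r-1}$ and compare $\binom{k-2}{r-1}$ to $\binom{m}{r-1}$ term by term. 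So your proposal is a valid proof sketch of a theorem the present paper merely cites.
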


\subsection{Known results on cycles in hypergraphs}

A hypergraph $\cH$ is a set system.  We often refer to the ground set as the set of  vertices $V(\cH)$ of $\cH$ and to the sets as the hyperedges $E(\cH)$ of $\cH$. When there is no ambiguity, we may also refer to the hyperedges as edges. In this paper, we prove versions of Theorems~\ref{th:D} and~\ref{th:EG} for hypergraphs with no restriction on edge sizes. Namely, we seek long {\em Berge cycles}.

A {\bf Berge cycle} of length $\ell$ in a hypergraph is a set of $\ell$ distinct vertices $\{v_1, \ldots, v_\ell\}$ and $\ell$ distinct edges $\{e_1, \ldots, e_\ell\}$ such that $\{ v_{i}, v_{i+1} \}\subseteq   e_i$ with indices taken modulo $\ell$. The vertices $\{v_1, \ldots, v_\ell\}$ are called {\bf representative vertices} of the Berge cycle.

  A {\bf Berge path} of length $\ell$ in a hypergraph  is a set of $\ell+1$ distinct vertices $\{v_1, \ldots, v_{\ell+1}\}$ and $\ell$ distinct hyperedges $\{e_1, \ldots, e_{\ell}\}$ such that $\{ v_{i}, v_{i+1} \}\subseteq   e_i$ for all $1\leq i\leq \ell$. The vertices $\{v_1, \ldots, v_{\ell+1}\}$ are called {\bf representative vertices} of the Berge path.

For a hypergraph $\cH$, the {\bf 2-shadow} of $\cH$, denoted $\partial_2 \cH$, is the graph on the same vertex set such that $xy \in E(\partial_2\cH)$ if and only if $\{x,y\}$ is contained in an edge of $\cH$.

Note that if we require no conditions on multiplicities of hyperedges, then we can arbitrarily add hyperedges of size $1$ without creating new Berge cyles or Berge paths. From now on, we only consider {\em simple} hypergraphs, i.e., those without multiple edges (except if it is stated otherwise).

Bermond, Germa, Heydemann, and Sotteau~\cite{bermond} were among the first to prove Dirac-type results for uniform hypergraphs without long Berge cycles: Let $k > r$ and $\cH$ be an $r$-uniform hypergraph with minimum degree $\delta(\cH)  \geq {k-2 \choose r-1} + (r-1)$, then $\cH$ contains a Berge cycle of length at least $k$.
For large $n$, generalizations and results for linear hypergraphs are proved by   Jiang and  Ma~\cite{JM}.
 Ma,  Hou, and  Gao~\cite{MHG_r3} studied $3$-uniform hypergraphs.
Coulson and Perarnau~\cite{rainbow} proved that if $\cH$ is an $r$-uniform hypergraph on $n$ vertices, $r = o(\sqrt{n})$, and $\cH$ has minimum degree $\delta(\cH) > {\lfloor (n-1)/2 \rfloor \choose r-1}$, then $\cH$ contains a Berge hamiltonian cycle.

Our new results differ from these in several aspects. We consider non-uniform hypergraphs, prove exact formulas, prove results for every $n$ (or every $n> 14$), and use only classical tools mentioned above and in  Section~\ref{ss0}.

\section{New results} 
Our first result is a Dirac-type condition that forces hamiltonian Berge cycles.

\begin{thm}\label{diracH}
Let $n \geq 15$  
and let $\cH$ be an $n$-vertex hypergraph such that $\delta(\cH) \geq 2^{(n-1)/2} + 1$ if $n$ is odd, or $\delta(\cH) \geq 2^{n/2 -1} + 2$ if $n$ is even. Then $\cH$ contains a Berge hamiltonian cycle.
\end{thm}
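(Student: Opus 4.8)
The plan is to work with the $2$-shadow $G:=\partial_2\cH$ and to split the task into two parts: produce a Hamilton cycle of $G$, and then choose it so that the $n$ edges along it can be assigned pairwise distinct hyperedges of $\cH$ containing them. The first part is immediate: every hyperedge of $\cH$ through a vertex $v$ is a subset of $\{v\}\cup N_G(v)$, so $\deg_\cH(v)\le 2^{|N_G(v)|}$, and together with the hypothesis this forces $|N_G(v)|\ge (n+1)/2$ when $n$ is odd and $|N_G(v)|\ge n/2$ when $n$ is even. In particular $\delta(G)\ge n/2$, so by Theorem~\ref{th:D} the graph $G$ is Hamiltonian; being dense, it also has many Hamilton cycles and supports the standard rotation--extension operations.

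For the second part, note that a Berge Hamilton cycle of $\cH$ is precisely a Hamilton cycle $C$ of $G$ together with a system of distinct representatives for the family $\{R(f):f\in E(C)\}$, where $R(f):=\{h\in E(\cH):f\subseteq h\}$. By Hall's theorem such a system exists for a fixed $C$ if and only if, for every $F\subseteq E(C)$, the number of hyperedges of $\cH$ meeting $F$ (i.e.\ containing at least one edge of $F$) is at least $|F|$; call such a $C$ \emph{good}. A clean sufficient condition for $C$ to be good is that every edge of $C$ lie in at least $n$ hyperedges of $\cH$: then a single edge of $F$ already witnesses the inequality. So one natural route is to locate a Hamilton cycle of $G$ avoiding ``unpopular'' edges, with a repair argument when this is impossible.

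I would then argue by contradiction. Assume $\cH$ has no Berge Hamilton cycle, and take a longest Berge path $v_1\dots v_{k+1}$ of $\cH$, with distinct hyperedges $f_1,\dots,f_k$ ($\{v_i,v_{i+1}\}\subseteq f_i$). Run the classical rotation--extension argument inside $G$: the bound $\delta(G)\ge n/2$ supplies the graph-theoretic rotations and, ultimately, a cycle through all of $v_1,\dots,v_{k+1}$, while the \emph{exponential} bound $\deg_\cH(u)\ge 2^{(n-1)/2}+1$, far larger than $n$, guarantees that at each vertex $u$ only few (at most $k\le n-1$) hyperedges have been consumed, so a not-yet-used hyperedge at $u$ is always available -- and if all such hyperedges happen to lie inside the current vertex set, then a rotation is available instead. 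One thus either extends the Berge path, or closes the resulting cycle into a Berge Hamilton cycle (the closing edge again finding a spare hyperedge by abundance), contradicting maximality. The delicate point, and the main obstacle, is the interface between the two mechanisms: an edge of $G$ may lie in only one hyperedge, so a rotation that exists in $G$ may reintroduce a hyperedge conflict; the core of the proof is to schedule the rotations, exploiting the surplus of hyperedges at each vertex guaranteed by the threshold, so that a fresh hyperedge is always at hand. The larger constant $2^{n/2-1}+2$ in the even case reflects the parity of $n$ and requires a small additional case check, and $n\ge 15$ is where the relevant inequalities comparing $2^{(n-1)/2}$ with polynomial quantities in $n$ first hold.

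The bound is essentially best possible: for odd $n=2m+1$ take disjoint $m$-sets $A,B$ and a vertex $w\notin A\cup B$, and let $\cH$ consist of all nonempty subsets of $A\cup\{w\}$ together with all nonempty subsets of $B\cup\{w\}$; then every vertex of $A\cup B$ has degree $2^{m}=2^{(n-1)/2}$, one below the threshold, while $w$ is a cut vertex of $\partial_2\cH$ and hence $\cH$ has no Berge Hamilton cycle. An analogous construction handles even $n$.
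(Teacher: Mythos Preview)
Your opening observation is correct and sharper than you may realise: if $G=\partial_2\cH$ then $\deg_\cH(v)\le 2^{|N_G(v)|}$, and the hypothesis forces $\delta(G)\ge \lceil n/2\rceil$, so the shadow is Hamiltonian by Dirac. But this is \emph{much} easier than what you actually need, and from here the proposal does not close the gap.

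The core issue is that Hamiltonicity of the shadow is extremely far from the existence of a Berge Hamilton cycle. Look at the paper's second and fourth sharpness constructions (even $n$): in both, the shadow is the complete graph $K_n$, yet at minimum degree $2^{n/2-1}+1$---one below the threshold---there is no Berge Hamilton cycle. So the shadow being Hamiltonian, or even complete, tells you essentially nothing; the content of the theorem lies entirely in the SDR step, and that step must use the exact value of the threshold. Your Hall sufficient condition (each cycle edge in $\ge n$ hyperedges) is therefore useless here: the hard instances have many shadow edges covered by a single hyperedge.

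Your rotation--extension sketch does not address this. You write that ``at each vertex $u$ only few (at most $k\le n-1$) hyperedges have been consumed, so a not-yet-used hyperedge at $u$ is always available''---true, but an available hyperedge at $u$ lies in $\{u\}\cup N_G(u)$, which in the tight examples is a set of size $\lceil n/2\rceil$, not all of $V$; there is no reason a fresh hyperedge should contain the specific pair the rotation needs. You yourself flag the ``delicate point'' that a rotation available in $G$ may force a hyperedge conflict, and then say the core of the proof is to ``schedule the rotations''---but no scheduling is provided. Since a one-unit change in the degree hypothesis flips the answer, a soft surplus argument cannot work; a genuine structural analysis is required.

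The paper's route is quite different. It takes a maximum matching $M$ in the incidence bipartite graph between $E(\cH)$ and $E(\partial_2\cH)$, lets $G$ be the graph of matched shadow-edges, and observes that $G$ is nonhamiltonian (else done). The main work is a counting argument, using Erd\H{o}s's bound $e(G)\le e(n,d)$ for nonhamiltonian graphs together with matroid exchange (Theorem~\ref{matroid}), to force $\delta(G)\ge\lfloor(n-1)/2\rfloor$; this is where $n\ge 15$ enters, via the inequality $2^k+1>2^d-\binom{d+1}{2}+e(n,d)$ for $d<k$. Then a structure lemma classifies such nonhamiltonian $G$ into five explicit families $\cG_1,\dots,\cG_5$, and a second lemma shows that swapping one or two matched pairs yields a hamiltonian Berge graph. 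The SDR is thus handled \emph{globally} through the maximum matching, not locally through rotations; this is the missing idea in your proposal.
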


The following four constructions show that Theorem~\ref{diracH} is the best possible.
\newline
${}$\enskip --- \enskip Let $n$ be odd. Let $\cH$ be the $n$-vertex hypergraph on the ground set $[n]$ with edges $\{A: A \subseteq [(n+1)/2]\} \cup \{B: B \subseteq \{(n+1)/2, \ldots n\}\}$. Then $\delta(\cH) = 2^{(n-1)/2}$ and $\cH$ has no hamiltonian Berge cycle (because it has a cut vertex).
\newline
${}$\enskip --- \enskip Let $n$ be even. Let $\cH$ be the $n$-vertex hypergraph on the ground set $[n]$ with edges $\{A: A \subseteq [n/2]\} \cup \{B: B \subseteq \{(n/2 +1, \ldots n\}\}$ and the set $[n]$. Then $\delta(\cH) = 2^{n/2 -1}+1$ and $\cH$ has no hamiltonian Berge cycle (because it has a cut edge, $[n]$).
\newline
${}$\enskip --- \enskip Let $n$ be odd.  Let $\cH$ be the $n$-vertex hypergraph on the ground set $[n]$ obtained by taking all hyperedges with at most one vertex in $[(n+1)/2]$. Then $\delta(\cH) = 2^{(n-1)/2}$, and $\cH$ cannot contain a Berge cycle with two consecutive representative vertices in $[(n+1)/2]$. 
\newline
${}$\enskip --- \enskip Let $n$ be even. Let $\cH$ be the $n$-vertex hypergraph on the ground set $[n]$ obtained by taking all hyperedges with at most one vertex in $[n/2 + 1]$ and the edge $[n]$. Then $\delta(\cH) = 2^{n/2 - 1} + 1$, and $\cH$ cannot contain a Berge cycle with two instances of two consecutive representative vertices in $[n/2 + 1]$ (because only one edge of $\cH$ contains multiple vertices in $[n/2+1]$).

Next, we consider hypergraphs without long Berge paths or cycles.

\begin{thm}\label{th:mindeg_path}
Let $k \geq 2$ and let $\cH$ be a hypergraph such that $\delta(\cH) \geq 2^{k-2}+1$.
Then $\cH$ contains a Berge path with $k$ base vertices.
\end{thm}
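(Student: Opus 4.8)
The case $k=2$ is immediate: any vertex has degree at least $2$, hence lies in a hyperedge of size at least $2$, which is a Berge path with $2$ base vertices. So assume $k\ge 3$ and argue by contradiction: suppose $\delta(\cH)\ge 2^{k-2}+1$ while $\cH$ has no Berge path with $k$ base vertices, and fix a \emph{longest} Berge path $P=v_1e_1v_2e_2\cdots e_\ell v_{\ell+1}$; then $\ell+1\le k-1$. The one elementary fact used is the standard ``no extension at an endpoint'' observation: if some hyperedge $f\notin\{e_1,\dots,e_\ell\}$ contains $v_1$ together with a vertex outside $V(P)$, then prepending $f$ lengthens $P$. Hence every hyperedge through $v_1$ is either one of the $\ell$ path edges or a subset of $V(P)$, and since $|V(P)|=\ell+1$ there are at most $2^{\ell}$ of the latter. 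Therefore $2^{k-2}+1\le d(v_1)\le \ell+2^{\ell}$, and since $\ell+2^{\ell}<2^{k-2}+1$ whenever $\ell\le k-3$, we must have $\ell=k-2$. From now on $P$ has exactly $k-1$ base vertices.

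Let $m$ denote the number of path edges that contain $v_1$ and also contain some vertex outside $V(P)$; by the previous paragraph these are precisely the hyperedges through $v_1$ that are not subsets of $V(P)$, so $d(v_1)=(2^{k-2}-t)+m$, where $t$ is the number of subsets of $V(P)$ containing $v_1$ that are not hyperedges of $\cH$. Hence $m\ge 1$ and $t\le m-1$. Choose a path edge $e_a\ni v_1$ with a vertex $u\in e_a\setminus V(P)$; since $e_a$ joins $v_a$ and $v_{a+1}$ on $P$, removing $e_a$ splits $P$ into the subpaths $v_1\cdots v_a$ and $v_{a+1}\cdots v_{k-1}$, and $e_a\supseteq\{v_a,v_{a+1},u\}$. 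The plan is to build a Berge path on the $k$ base vertices $\{u\}\cup V(P)$ from the $k-2$ edges $e_1,\dots,e_{k-2}$ together with one extra hyperedge acting as a ``bridge''. If $\{v_1,v_{k-1}\}$ is a hyperedge, then the walk $u\,e_a\,v_{a+1}\,e_{a+1}\cdots e_{k-2}\,v_{k-1}\,\{v_1,v_{k-1}\}\,v_1\,e_1\cdots e_{a-1}\,v_a$ is such a Berge path. Otherwise $\{v_1,v_{k-1}\}$ is one of the $t\le m-1$ missing subsets; since the $m$ sets $\{v_1,v_{a'+1}\}$, taken over the $m$ escaping path edges $e_{a'}\ni v_1$, are pairwise distinct, at least one of them, say $\{v_1,v_{b+1}\}$, is a hyperedge, and then the walk $u\,e_b\,v_b\,e_{b-1}\cdots e_1\,v_1\,\{v_1,v_{b+1}\}\,v_{b+1}\,e_{b+1}\cdots e_{k-2}\,v_{k-1}$ works. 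The bridge used is always distinct from each $e_i$: the only way a bridge $\{v_1,v_j\}$ could equal some $e_i$ would be $j=2$, but the escaping edge giving rise to $\{v_1,v_2\}$ is $e_1$ itself, which has size at least $3$. Thus we have exhibited a Berge path with $k$ base vertices, a contradiction.

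The reduction $\ell=k-2$ and the verification that the displayed walks are legitimate Berge paths are routine. The delicate point is the bookkeeping in the tight case --- ensuring that among the bridge candidates (the set $\{v_1,v_{k-1}\}$ together with one set $\{v_1,v_{a'+1}\}$ per escaping path edge through $v_1$) at least one is actually a hyperedge of $\cH$, which is exactly where the hypothesis $d(v_1)\ge 2^{k-2}+1$ is consumed, and handling cleanly the boundary positions $a=1$ and $a=k-2$ and small values of $k$, where some of these candidate sets coincide. I expect this to be the main obstacle; everything else is elementary.
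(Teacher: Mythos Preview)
Your argument is correct and takes a genuinely different route from the paper's. The paper proves Theorems~\ref{th:mindeg_path} and~\ref{th:mindeg_cycle} simultaneously by choosing a \emph{best} Berge path (longest, and among those, one minimizing $\sum|e_i|$) and then building an injection from $\cH_1\setminus\{e_{k-1}\}$ into the subsets of $\{v_1,\dots,v_{k-1}\}$ containing $v_1$: whenever a path edge $e_i$ is ``escaping'' (contains $v_1$ and a vertex outside $W$), the secondary minimality guarantees that sets like $\{v_i,v_{i+1}\}$ or $\{v_1,v_i,v_{i+1}\}$ are \emph{not} hyperedges, so they can serve as images of $e_i$. You instead use only a longest path and, in the tight case $\ell=k-2$, directly construct a Berge path on $k$ base vertices by locating a two-element ``bridge'' hyperedge among the candidates $\{v_1,v_{k-1}\}$ and $\{v_1,v_{a'+1}\}$; the pigeonhole $m\ge t+1$ is exactly what makes one of these candidates exist. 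Your approach is more constructive and self-contained for the path statement; the paper's injection, by contrast, yields the exact value $\deg_\cH(v_1)=2^{k-2}+1$ and the extra information $v_1\in e_{k-1}$, which is what drives the cycle theorem in the same breath. One small point: in Case~A the bridge $\{v_1,v_{k-1}\}$ is not literally ``given rise to'' by an escaping edge, so your distinctness clause does not apply verbatim; but if $\{v_1,v_{k-1}\}=e_i$ then $\{i,i+1\}=\{1,k-1\}$ forces $k=3$, in which case the sole path edge $e_1$ must be your escaping edge $e_a$ and hence has size at least $3$, so the conclusion stands.
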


A vertex disjoint union of complete hypergraphs of $k-1$ vertices shows that this bound is  best possible for $n:=|V(\cH)|$  divisible by $(k-1)$.
It would be interesting to find  $\max \delta(\cH)$ for other values of $n$, and also for the cases when $\cH$ is
  connected or 2-connected.

\begin{thm}\label{th:mindeg_cycle}
Let $k \geq 3$ and let $\cH$ be a hypergraph such that $\delta(\cH) \geq 2^{k-2}+2$.
Then $\cH$ contains a Berge cycle of length at least $k$.
\end{thm}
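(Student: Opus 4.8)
The plan is to argue by contradiction. Suppose $\cH$ has no Berge cycle of length at least $k$. Since $\delta(\cH)\ge 2^{k-2}+2\ge 2^{k-2}+1$, Theorem~\ref{th:mindeg_path} gives a Berge path on at least $k$ base vertices, so we may fix a \emph{longest} Berge path $P=v_1e_1v_2e_2\cdots e_mv_{m+1}$, and then $m+1\ge k$. The goal is to bound $\deg_\cH(v_1)$ from above by $2^{k-2}+1$, which contradicts the minimum degree hypothesis.

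The routine part is an Erd\H{o}s--Gallai/Dirac-style classification of the hyperedges through the endpoint $v_1$. If a hyperedge $f\ni v_1$ is not one of $e_1,\dots,e_m$, then $f\subseteq V(P)$ (a vertex of $f$ outside $V(P)$ would prepend to $P$ and lengthen it). Using the absence of a Berge cycle of length $\ge k$: (a) if such a non-path edge $f$ contains $v_1$ and its largest-index vertex is $v_j$, then $v_1e_1v_2\cdots e_{j-1}v_jfv_1$ is a Berge cycle of length $j$, so $j\le k-1$ and hence $f\subseteq\{v_1,\dots,v_{k-1}\}$; (b) if a path edge $e_i$ with $i\ge 2$ contains $v_1$, then $v_1e_1v_2\cdots e_{i-1}v_ie_iv_1$ is a Berge cycle of length $i$, so $i\le k-1$. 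Therefore every edge through $v_1$ lies in $\{e_1,\dots,e_{k-1}\}\cup\{f:\ v_1\in f\subseteq\{v_1,\dots,v_{k-1}\}\}$, which gives only $\deg_\cH(v_1)\le (k-1)+2^{k-2}$.

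The crux — and essentially all of the difficulty — is to sharpen this crude bound from $2^{k-2}+k-1$ to $2^{k-2}+1$: we must rule out the surplus $\le k-2$ path edges that could a priori pass through $v_1$. For ordinary graphs this is automatic, since a path edge is a $2$-set of consecutive vertices and only $e_1$ can meet the endpoint $v_1$; for hyperedges of unbounded size it is not. I would remove these surplus edges via P\'osa-type rotations: whenever a path edge $e_i$ ($i\ge 2$) contains $v_1$, the sequence $v_ie_{i-1}v_{i-1}\cdots e_1v_1e_iv_{i+1}\cdots e_mv_{m+1}$ is again a longest Berge path, now ending at $v_i$, so the confinement (a)--(b) also constrains $v_i$. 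Iterating rotations and combining the resulting constraints with a crossing argument at the two ends of $P$ — namely, if $\{v_1,v_j\}\subseteq f$ and $\{v_{m+1},v_{j+1}\}\subseteq g$ with $f,g$ distinct from each other and from all of $e_1,\dots,e_{j-1},e_{j+1},\dots,e_m$, then $v_1e_1\cdots e_{j-1}v_j\,f\,v_{m+1}e_m\cdots e_{j+1}v_{j+1}\,g\,v_1$ is a Berge cycle on all $m+1\ge k$ vertices — should force $\deg_\cH(v_1)\le 2^{k-2}+1$. I expect this step to require a short split into the cases $m+1<2(k-1)$ and $m+1\ge 2(k-1)$ (so that in the latter the two confinement windows $\{v_1,\dots,v_{k-1}\}$ and $\{v_{m-k+3},\dots,v_{m+1}\}$ are disjoint), together with a separate check of the degenerate configurations in which a single hyperedge would have to play several of the roles above. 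Once $\deg_\cH(v_1)\le 2^{k-2}+1$ is established, it contradicts $\delta(\cH)\ge 2^{k-2}+2$ and the proof is complete.
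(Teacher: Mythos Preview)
Your setup through the crude bound $\deg_{\cH}(v_1)\le 2^{k-2}+(k-1)$ is correct and matches the paper. The gap is exactly where you say it is: you have not actually sharpened this to $2^{k-2}+1$, and the plan you sketch does not do the job.

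The P\'osa rotation/crossing machinery is calibrated to count \emph{neighbours} of the endpoint, not \emph{hyperedges} through it. After any rotation the new endpoint still sees only the window $W=\{v_1,\dots,v_{k-1}\}$, so rotations do not produce new constraints that whittle down the surplus path edges $e_i\ni v_1$. Your crossing argument (besides having $f$ and $g$ interchanged in the displayed cycle) needs an edge through $v_{m+1}$ hitting $v_j$ with $j\le k-1$; but the analogous confinement forces all edges through $v_{m+1}$ into $\{v_{m-k+3},\dots,v_{m+1}\}$, and in the regime $m+1\ge 2(k-1)$ these windows are disjoint, so no crossing is available at all. In the overlapping regime $m+1<2(k-1)$ you would still need to convert ``many hyperedges through $v_1$'' into ``many indices $j$ with a usable pair of witnesses $f,g$ distinct from every $e_\ell$,'' and nothing you have written does that. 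The ``separate check of the degenerate configurations in which a single hyperedge would have to play several roles'' is precisely the whole problem, and it is not short.

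The paper's device is different and is what you are missing: among all longest Berge paths, choose one that also \emph{minimizes} $\sum_i |e_i|$. Then whenever a path edge $e_i$ (with $2\le i\le k-2$) contains $v_1$ and is not contained in $W$, the set $\{v_1,v_i,v_{i+1}\}$ cannot be an edge of $\cH$ (otherwise swapping it in for $e_i$ gives a ``better'' path). This frees up a specific slot in $\cK=\{A\subseteq W:\ v_1\in A\}$ for each such $e_i$, and one gets an injection $\cH_1\setminus\{e_{k-1}\}\hookrightarrow \cK$, hence $\deg_{\cH}(v_1)\le 2^{k-2}+1$. The secondary minimization is the key idea that replaces rotations here; without it, your proposal does not close.
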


The following constructions show that the bound in Theorem~\ref{th:mindeg_cycle} is  best possible when $n$ is divisible by $(k-1)$ and also when $n\equiv 1 \mod (k-1)$ for $n>(k-1)(2^{k-2}+1)$. 
In the first case, take a vertex disjoint union of complete hypergraphs with $k-1$ vertices and add one more set, namely $[n]$. 
In the other case, take $m:=(n-1)/(k-1) \geq 2^{k-2}+1$ disjoint $(k-1)$-sets $A_1, \dots, A_m$ and an element $x$ such that $[n]=(\cup _{1\leq i\leq m}A_i)\cup  \{ x\} $. Then define $\cH$ as the union of complete hypergraphs on
 the sets $A_i$'s together with the hyperedges of the form $A_i \cup \{ x\} $. 
If we do not insist on connectedness, then $(2^{k-2}+1)$-regular examples can be constructed for \emph{all} $n\geq k^2 2^{k-2}$.

Finally, we prove a hypergraph version of Theorem~\ref{th:EG}. 

\begin{thm}\label{th:EGh}Let $n \geq k \geq 3$ and let $\cH$ be an $n$-vertex hypergraph with no Berge cycle of length $k$ or longer. Then \[e(\cH) \leq
 2+ \frac{n-1}{k-2}\left(2^{k-1}-2\right) 
.\]
\end{thm}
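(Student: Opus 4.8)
The plan is to induct on $n$ using the block structure of the $2$-shadow $G:=\partial_2\cH$, reducing everything to a single ``$2$-connected'' lemma whose engine is Theorem~\ref{th:mindeg_cycle}. If $G$ has a cut vertex, let $B$ be an end-block (a block containing exactly one cut vertex $v$) and put $W:=V(B)\setminus\{v\}$; since no edge of $G$ joins $W$ to $V(G)\setminus V(B)$, every hyperedge of $\cH$ lies in $V(B)$ or in $V(\cH)\setminus W$, so $e(\cH)=m_B+e(\cH')$, where $\cH'$ consists of the hyperedges inside $V(\cH)\setminus W$ (a set of $n-|W|<n$ vertices, as $|W|\ge1$) and $m_B$ counts the hyperedges inside $V(B)$ that are not subsets of $\{v\}$. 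Feeding $\cH'$ to the induction hypothesis, the theorem then follows from the Key Lemma: \emph{if $\partial_2\cH$ is $2$-connected on $b$ vertices, $\cH$ has no Berge cycle of length $\ge k$, and $v\in V(\cH)$, then the number of hyperedges of $\cH$ not contained in $\{v\}$ is at most $\frac{b-1}{k-2}(2^{k-1}-2)$.}

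Granting this, $m_B\le\frac{|V(B)|-1}{k-2}(2^{k-1}-2)$ together with (by induction) $e(\cH')\le2+\frac{n-|V(B)|}{k-2}(2^{k-1}-2)$ sums to $2+\frac{n-1}{k-2}(2^{k-1}-2)$; the additive constant $2$ gets paid only at the bottom of the recursion, absorbing a possible empty hyperedge and the ``root'' block, whose extremal shape is a complete hypergraph on $k-1$ vertices. The base cases $b\le k-1$ of the Key Lemma are immediate, since the quantity on the left is at most $2^{b}-2$ while $m\mapsto\frac{2^{m}-2}{m-1}$ is nondecreasing for $m\ge2$, so $\frac{2^{b}-2}{b-1}\le\frac{2^{k-1}-2}{k-2}$.

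For the Key Lemma itself --- the heart of the argument --- if every vertex of $\cH$ lies in at least $2^{k-2}+2$ hyperedges, Theorem~\ref{th:mindeg_cycle} produces a Berge cycle of length $\ge k$, a contradiction; hence some vertex $u$ lies in at most $2^{k-2}+1$ hyperedges. If $u$ in fact lies in at most $\frac{2^{k-1}-2}{k-2}$ hyperedges (picking $u\ne v$ when possible), one deletes $u$ and recurses on the main statement, where the arithmetic closes. I expect the main obstacle to be the intermediate regime $\frac{2^{k-1}-2}{k-2}<d_{\cH}(u)\le 2^{k-2}+1$ --- a nonempty range for every $k\ge3$ --- and the subcase in which the only low-degree vertex is the forbidden $v$. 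There one has to exploit the local structure at $u$: if the hyperedges through $u$ jointly span at most $k-1$ vertices, then $u$ and those hyperedges form a small complete-type cluster chargeable to the $(k-1)$-vertex budget $2^{k-1}-2$; otherwise one combines a longest Berge path, Dirac's theorem (Theorem~\ref{th:D}) applied to the $2$-connected shadow $G$, and the rerouting observation below to build a Berge cycle of length $\ge k$. Splitting the hyperedge count according to which case occurs, and tracking the additive constant through the block recursion, finishes the proof.

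Finally, the rerouting observation used above: if $e$ is a hyperedge and $\cH-e$ contains a Berge path of length $\ge k-1$ both of whose ends lie in $e$, then closing that path through $e$ yields a Berge cycle of length $\ge k$ in $\cH$. This is what keeps large hyperedges in check --- a lone large hyperedge merely supplies one of the $k$ edges of a potential Berge cycle --- so no clean ``at most two large hyperedges'' claim is available or true (three pairwise disjoint $(k-1)$-sets being a counterexample); the large hyperedges are absorbed into the local analysis. I would expect this local, intermediate-degree step to be the only genuinely delicate part.
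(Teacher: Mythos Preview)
Your proposal is an outline rather than a proof, and the gap you yourself flag---the ``intermediate regime'' $\frac{2^{k-1}-2}{k-2}<d_{\cH}(u)\le 2^{k-2}+1$ in the Key Lemma---is real and is not closed by the mechanisms you sketch. In your first sub-case (the hyperedges through $u$ span a set $S$ of size at most $k-1$), deleting just $u$ removes too many edges for the arithmetic to close, while deleting all of $S$ is illegitimate: hyperedges meeting $S\setminus\{u\}$ need not lie inside $S$, so you discard uncontrollably many and cannot recurse. In your second sub-case ($u$ has at least $k-1$ neighbours in $\partial_2\cH$), this says nothing about $\delta(\partial_2\cH)$, so Theorem~\ref{th:D} applied to the $2$-connected shadow yields no long cycle; and even a long cycle in the shadow need not lift to a Berge cycle, since distinct shadow edges may all come from a single hyperedge. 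Your rerouting observation only supplies the closing edge of a hypothetical long Berge path, not the path. Filling this gap looks about as hard as the theorem itself.

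The paper's argument is entirely different and short. Among $n$-vertex $\cH$ with no Berge cycle of length $\ge k$ and $e(\cH)$ maximum, pick one minimizing $\sum_{e}|e|$; then $\cH$ is a \emph{downset} (otherwise replace some $e$ by a missing $e'\subsetneq e$). In a downset every cycle of $H:=\partial_2\cH$ is already a Berge cycle in $\cH$ (use the $2$-element edges), so $c(H)<k$; moreover $e_r(\cH)$ is at most the number of $K_r$'s in $H$, which Theorem~\ref{cliques} bounds by $\frac{n-1}{k-2}\binom{k-1}{r}$. Summing over $0\le r\le k-1$ gives the stated bound. The downset reduction is precisely the missing idea: it collapses the Berge condition onto the shadow and makes any block decomposition or low-degree-vertex induction unnecessary.
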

The bound in Theorem~\ref{th:EGh} is best possible when $n \equiv 1 \mod (k-2)$. Take $m:= (n-1)/(k-2)$ and disjoint sets $A_1 , \ldots, A_m$ of size $k-2$. Let $x$ be a new element, and set $[n] = (\cup _{1\leq i\leq m}A_i)\cup  \{ x\}$. Define $\cH$ to be the union of all sets $A$ such that there exists an $i$ with $A \setminus \{x\} \subseteq A_i$. Note that the 2-shadow $\partial_2(\cH)$ is in the family $\cF_{n,k}$ defined before Theorem~\ref{th:EG}.

There are many exact results concerning the maximum size of uniform hypergraphs avoiding Berge paths and cycles, see the recent results of Ergemlidze et al.~\cite{EGyMSTZ}
or one by the present authors~\cite{FKL}.

\section{Dirac type conditions for hamiltonian hypergraphs}
In this section, we present a proof for Theorem~\ref{diracH}. The proof method relies on reducing the hypergraph to a dense nonhamiltonian graph.
In the next three subsections we collect some results  about such graphs.
Subsections~\ref{ss33} and~\ref{ss34} contain the proof for hypergraphs.

\subsection{Classical tools}\label{ss0}

Let $G$ be an $n$-vertex graph. The {\em hamilton-closure} of $G$ is the unique  graph $C(G)$ of order $n$  that can be obtained from $G$ by recursively joining nonadjacent vertices with degree-sum at least $n$.

\begin{thm}[Bondy, Chv\'atal~\cite{BC}]\label{BCthm}
If $C(G)$ is hamiltonian, then so is $G$.
\end{thm}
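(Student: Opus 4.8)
The plan is to reduce the theorem to a single-edge statement and then iterate it backwards. By definition, $C(G)$ is the last term of a chain $G = G_0 \subseteq G_1 \subseteq \cdots \subseteq G_m = C(G)$, where each $G_{j+1} = G_j + u_jv_j$ is obtained by joining two vertices $u_j,v_j$ that are nonadjacent in $G_j$ and satisfy $\deg_{G_j}(u_j) + \deg_{G_j}(v_j) \geq n$. I would fix one such chain realizing $C(G)$ (uniqueness of the closure is not needed for this direction) and prove the following local step: if $H$ is an $n$-vertex graph, $u,v$ are nonadjacent vertices of $H$ with $\deg_H(u)+\deg_H(v)\geq n$, and $H+uv$ is hamiltonian, then $H$ is hamiltonian. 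Granting this, a downward induction on $j$ shows that hamiltonicity of $G_m=C(G)$ forces hamiltonicity of $G_{m-1}$, then of $G_{m-2}$, and so on down to $G_0=G$. The point making the induction work is that the degree-sum hypothesis is tested in the current graph $G_j$ at the moment the edge is added, which is exactly how the closure is built.

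To establish the local step I would argue by contradiction, assuming $H$ is not hamiltonian while $H+uv$ is. Every hamiltonian cycle of $H+uv$ must use the new edge $uv$, since otherwise it would already be a hamiltonian cycle of $H$. Deleting $uv$ from such a cycle leaves a hamiltonian path of $H$ with endpoints $u$ and $v$; write it as $x_1x_2\cdots x_n$ with $x_1=u$ and $x_n=v$. The goal is to produce an index $i$ with $x_1x_{i+1}\in E(H)$ and $x_ix_n\in E(H)$, because such an $i$ yields the hamiltonian cycle $x_1x_2\cdots x_i x_n x_{n-1}\cdots x_{i+1} x_1$ of $H$, contradicting non-hamiltonicity.

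The existence of this crossing index is a pigeonhole count. I would set
\[
A = \{\, i : 1 \le i \le n-1,\ x_1 x_{i+1} \in E(H)\,\}, \qquad
B = \{\, i : 1 \le i \le n-1,\ x_i x_n \in E(H)\,\}.
\]
Since the neighbours of $u=x_1$ all lie among $x_2,\dots,x_n$, we have $|A|=\deg_H(u)$, and similarly $|B|=\deg_H(v)$. Both $A$ and $B$ are subsets of $\{1,\dots,n-1\}$, a set of size $n-1$, whereas $|A|+|B|=\deg_H(u)+\deg_H(v)\ge n > n-1$. Hence $A\cap B\neq\emptyset$, which furnishes the desired index $i$ and completes the local step, and with it the theorem.

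I expect essentially no serious obstacle here; the one subtle point requiring care is precisely that the degree-sum condition must be read off in the intermediate graph $G_j$ (where degrees are larger than in $G$) rather than in $G$ itself. Since this is how the hamilton-closure is defined, the backward induction goes through cleanly.
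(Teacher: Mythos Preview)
Your proof is correct and is the standard argument (essentially Ore's lemma iterated backwards along the closure sequence). The paper itself does not prove Theorem~\ref{BCthm}; it is stated in the ``Classical tools'' subsection and simply cited from Bondy and Chv\'atal~\cite{BC}, so there is no in-paper proof to compare against. Your write-up matches the original proof in~\cite{BC}.
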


A graph $G$ is called {\em hamiltonian-connected} if for any pair of vertices $x,y \in V(G)$ there is a hamiltonian $(x,y)$-path.
The following corollary can be  obtained from Theorem~\ref{BCthm} or from the classical result of P\'osa~\cite{Posa}:
If for every pair of nonadjacent vertices $x,y \in V(G)$ we have $d(x) + d(y) \geq |V(G)| + 1$, then $G$ is hamiltonian-connected.

\begin{cor}\label{cor1}
If $e(G)\geq \binom{n}{2}-2$ and $n\geq 5$ then $G$ is hamiltonian-connected.  \qed
\end{cor}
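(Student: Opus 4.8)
Proof proposal for Theorem~\ref{cor1} (the corollary: if $e(G)\ge\binom n2-2$ and $n\ge5$ then $G$ is hamiltonian-connected).

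The plan is to invoke the Bondy--Chv\'atal closure machinery (Theorem~\ref{BCthm}) together with P\'osa's condition stated just above the corollary. First I would show that the missing-edge graph $\overline G$ has at most $2$ edges, so $G$ is obtained from $K_n$ by deleting at most two edges. I then consider the possible configurations of $E(\overline G)$: either $\overline G$ has no edges (then $G=K_n$ is trivially hamiltonian-connected for $n\ge3$), or one edge, or two independent edges, or a path of two edges. In each case I want to verify the P\'osa hamiltonian-connectedness hypothesis: for every pair of nonadjacent vertices $x,y$ in $G$ one has $d(x)+d(y)\ge n+1$.

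The key computation is that if $xy\notin E(G)$ then $xy\in E(\overline G)$, and since $\overline G$ has at most two edges, each of $x,y$ is incident to at most $2$ edges of $\overline G$; moreover the edge $xy$ itself is counted in both. A cleaner bound: $d_{\overline G}(x)+d_{\overline G}(y)\le e(\overline G)+1\le 3$ whenever $x,y$ span an edge of $\overline G$ (the ``$+1$'' because the common edge $xy$ contributes to both degrees, and there are at most $e(\overline G)-1\le1$ further edges, each contributing at most once to the sum). Hence $d_G(x)+d_G(y)=2(n-1)-\big(d_{\overline G}(x)+d_{\overline G}(y)\big)\ge 2n-2-3=2n-5$. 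For $n\ge5$ we have $2n-5\ge n+1\iff n\ge6$, which handles $n\ge6$ directly via P\'osa's theorem. The single remaining case $n=5$ I would check by hand: $K_5$ minus at most two edges is easily seen to be hamiltonian-connected (e.g.\ using the closure — any two nonadjacent vertices in such a graph have degree sum $\ge 2\cdot3-1=5<6$ is \emph{not} enough, so here I would instead directly exhibit hamiltonian paths, or note $C(G)=K_5$ since every nonadjacent pair has degree sum $\ge 5=n$, wait that is the hamiltonicity closure; for hamiltonian-connectedness one adds edges on degree-sum $\ge n+1$, so I must argue $n=5$ separately by inspection of the three non-isomorphic graphs $K_5-e$, $K_5-2e$).

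The main obstacle is precisely this boundary case $n=5$, where the generic degree-sum estimate $2n-5$ falls just short of $n+1$; there I expect to need an ad hoc verification, listing the at most three graphs that arise and exhibiting a hamiltonian $(x,y)$-path for each nonadjacent pair (of which there are very few). Everything else is a short counting argument feeding into the cited P\'osa / Bondy--Chv\'atal results, so no genuinely new idea is required.
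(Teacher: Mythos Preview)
Your approach is correct and matches what the paper intends: the paper gives no proof beyond citing P\'osa's degree-sum criterion (and Bondy--Chv\'atal) followed by a \qed, and your argument is precisely the natural way to fill that in. Your degree-sum bound $d_{\overline G}(x)+d_{\overline G}(y)\le e(\overline G)+1\le 3$ for a nonadjacent pair is valid (the edge $xy$ contributes $2$, each of the at most one remaining complementary edge contributes at most $1$), so $d_G(x)+d_G(y)\ge 2n-5\ge n+1$ for $n\ge 6$, and P\'osa finishes those cases. You are also right that $n=5$ genuinely falls outside the P\'osa bound in exactly one configuration: when the two missing edges share a vertex (so that vertex has degree $2$ and its non-neighbours have degree $3$, giving sum $5<6$); the other two configurations ($K_5$, $K_5-e$, $K_5$ minus a matching) do satisfy the degree-sum condition. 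So the only graph requiring direct inspection is $K_5$ minus a $2$-edge path, which is easily checked to be hamiltonian-connected. Your proposal is complete once that single small verification is written out.
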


We will need the following result about the structure of matchings in bipartite graphs. It is a well known fact in the theory of transversal matroids (but one can also give a short, direct proof
 finding an $M_3\subseteq M_1\cup M_2$). 

\begin{thm}\label{matroid}
Let $G[X,Y]$ be a bipartite graph. Suppose that there is a matching $M_1$ in $G$ joining the vertices of $X_1\subseteq X$ and $Y_1\subseteq Y$. 
Suppose also that we have another matching $M_2$ with end vertices $X_2\subseteq X$ and $Y_2 \subseteq Y$ such that $Y_2\subseteq Y_1$. 
Then there exists a third matching $M_3$ from $X_3 \subseteq X$ to $Y_3 \subseteq Y$ such that 
\begin{equation*}
  Y_3=Y_1 \quad \text{and} \quad X_3\supseteq X_2. 
\end{equation*}
\end{thm}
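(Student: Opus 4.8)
The plan is to exploit the symmetric difference $M_1 \triangle M_2$, which decomposes into vertex-disjoint paths and even cycles that alternate between edges of $M_1$ and edges of $M_2$. The target matching $M_3$ should be built inside $M_1 \cup M_2$ by choosing, in each such component, whichever of the two "halves" of the component retains all the $Y_1$-vertices it meets while covering as many $X_2$-vertices as possible. First I would set up notation: let $F = M_1 \triangle M_2$, and observe every vertex has degree at most $2$ in $M_1 \cup M_2$, so the components of $F$ are paths and even cycles with edges alternating in membership between $M_1$ and $M_2$; vertices not touched by $F$ are matched the same way by both matchings, so we can simply keep those edges and restrict attention to $F$.

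Next I would analyze the component types. On an even cycle, both alternating perfect matchings of the cycle cover exactly the same vertex set, so it is harmless to use the $M_1$-edges there; this keeps every $Y_1$-vertex on the cycle. For a path component $P$, the key point is to look at its endpoints. Because $M_1$ is a matching covering $Y_1$, any $Y_1 \cap V(P)$ that fails to be covered by $M_1$-edges of $P$ must be an endpoint of $P$ incident (in $P$) only to an $M_2$-edge; call such an endpoint "bad." The claim is that a path has at most, and in the relevant configuration exactly, the flexibility to absorb $X_2$ while keeping all $Y_1$-vertices. Concretely: if $P$ has no bad endpoint in $Y_1$, take its $M_1$-edges into $M_3$ (this covers all $Y_1$-vertices of $P$ and also, I would check, all $X_2$-vertices of $P$ except possibly an $M_1$-endpoint in $X$, which is then not needed since using $M_2$-edges there would drop a $Y_1$-vertex — one has to verify this conflict cannot arise given $Y_2 \subseteq Y_1$). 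The crucial case is a path both of whose endpoints lie in sensitive sets; here the hypothesis $Y_2 \subseteq Y_1$ is what forces the counting to work out, because it guarantees that whenever an $M_2$-edge "wants" to be in $M_3$ to cover an $X_2$-vertex, the $Y$-endpoint it carries is already in $Y_1$ and hence not lost.

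The main obstacle — and the place I expect to spend the real effort — is precisely this per-path case analysis: showing that on every path component one can simultaneously (i) cover all of $Y_1 \cap V(P)$ and (ii) cover all of $X_2 \cap V(P)$. The danger is a path of odd length whose two endpoints pull in opposite directions — one endpoint a $Y_1$-vertex forcing the $M_1$-alternation, the other an $X_2$-vertex forcing the $M_2$-alternation. I would rule this out by arguing that an $X_2$-endpoint of a path component is covered by an $M_2$-edge in $P$ whose other end is a $Y_2 \subseteq Y_1$ vertex that is \emph{not} an endpoint of $P$ (since $M_1$ covers $Y_1$, that vertex also has an $M_1$-edge, so it is interior), hence that endpoint's $X_2$-vertex can be picked up without sacrificing any $Y_1$-vertex by a local swap along $P$ starting from that end. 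Assembling $M_3$ from these per-component choices plus the untouched common edges yields a matching with $Y_3 = Y_1$ and $X_3 \supseteq X_2$, completing the proof. \qed
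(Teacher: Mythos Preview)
The paper does not actually prove Theorem~\ref{matroid}; it merely remarks that the statement is a well-known fact about transversal matroids and that ``one can also give a short, direct proof finding an $M_3\subseteq M_1\cup M_2$.'' Your proposal is precisely that direct proof, and the approach is correct.

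One suggestion to streamline your write-up: your ``danger case'' (a path with a $Y_1$-endpoint pulling toward the $M_1$-alternation and an $X_2$-endpoint pulling toward the $M_2$-alternation) can be dispatched more cleanly than by a local-swap argument. Observe that no endpoint of a path component of $M_1\triangle M_2$ lying in $Y$ can be incident to an $M_2$-edge: such a vertex would lie in $Y_2\subseteq Y_1$, hence would also carry an $M_1$-edge and therefore have degree~$2$ in $M_1\triangle M_2$, contradicting that it is an endpoint. Thus every $Y$-endpoint of a path is incident to an $M_1$-edge. Combining this with the bipartite parity (vertices alternate between $X$ and $Y$, edges alternate between $M_1$ and $M_2$) forces each path component to be of one of only two types: either both endpoints carry $M_1$-edges (one in $Y$, one in $X$), in which case you take the $M_1$-edges; or both endpoints lie in $X$ with one $M_1$-edge and one $M_2$-edge, in which case you take the $M_2$-edges. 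In either case all $Y_1$-vertices and all $X_2$-vertices of the path are covered, and the feared mixed configuration simply does not arise. This replaces your per-path case analysis with a single parity observation and makes the role of the hypothesis $Y_2\subseteq Y_1$ completely transparent.
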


\begin{thm}[Erd\H{o}s~\cite{Erdos}]\label{Erdos} Let $n, d$ be integers with $1 \leq d \leq \left \lfloor \frac{n-1}{2} \right \rfloor$, and set $h(n,d):={n-d \choose 2} + d^2$.
If $G$ is a nonhamiltonian graph on $n$ vertices with minimum degree $\delta(G) \geq d$, then
     \[e(G) \leq \max\left\{ h(n,d),h(n, \left \lfloor \frac{n-1}{2} \right \rfloor)\right\}=:e(n,d).\] 
\end{thm}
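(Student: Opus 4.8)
The plan is to reduce to the Bondy--Chv\'atal closure and then run a degree-sequence counting argument in the spirit of Chv\'atal's hamiltonicity criterion. First I would pass to the closure $H:=C(G)$. Since $G$ is nonhamiltonian, Theorem~\ref{BCthm} (in contrapositive form) guarantees that $H$ is nonhamiltonian as well; in particular $H\neq K_n$ and, being closed, every pair of nonadjacent vertices of $H$ has degree-sum at most $n-1$. Because adding edges only raises degrees, $e(G)\le e(H)$ and $\delta(H)\ge\delta(G)\ge d$, so it suffices to bound $e(H)$.

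The heart of the argument is to locate a good threshold index. Order the degrees of $H$ as $d_1\le\dots\le d_n$ and choose a nonadjacent pair $u,v$ with $d(u)\le d(v)$ maximizing $d(u)+d(v)$; set $i:=d(u)$. The closure property gives $d(u)+d(v)\le n-1$, hence $d(u)\le(n-1)/2$ and $d(v)\le n-1-i$, so $1\le i\le\lfloor(n-1)/2\rfloor<n/2$. By maximality of the pair, every non-neighbor $w$ of $v$ satisfies $d(w)\le d(u)=i$, and there are at least $n-1-d(v)\ge i$ such vertices, so $d_i\le i$. Symmetrically, $u$ together with its $n-1-i$ non-neighbors gives $n-i$ vertices of degree at most $n-1-i$, so $d_{n-i}\le n-i-1$. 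Finally, $d_i\le i$ combined with $d_i\ge\delta(H)\ge d$ forces $i\ge d$.

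With the index in hand the edge count is routine. Splitting the sorted degree sequence into the blocks $1,\dots,i$ (degrees $\le i$), $i+1,\dots,n-i$ (degrees $\le n-i-1$), and $n-i+1,\dots,n$ (degrees $\le n-1$), I would obtain
\[
2e(H)\le i^2+(n-2i)(n-i-1)+i(n-1),
\]
and a short expansion shows the right-hand side equals $2h(n,i)$. Thus $e(G)\le e(H)\le h(n,i)$ for some integer $i$ with $d\le i\le\lfloor(n-1)/2\rfloor$. To conclude, I would note that $h(n,x)=\binom{n-x}{2}+x^2$ is convex in $x$ (its second difference equals $3>0$), so on the interval $[d,\lfloor(n-1)/2\rfloor]$ it is maximized at an endpoint; hence $h(n,i)\le\max\{h(n,d),h(n,\lfloor(n-1)/2\rfloor)\}=e(n,d)$, as required.

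The step I expect to be the most delicate is the extraction of the index $i$: one must pick the extremal nonadjacent pair correctly and then read off \emph{both} inequalities $d_i\le i$ and $d_{n-i}\le n-i-1$ from it, which is exactly the mechanism underlying Chv\'atal's criterion. Getting the counts of non-neighbors and the resulting degree bounds to line up (including the verification $i<n/2$) is where care is needed. By contrast, the algebraic identity with $2h(n,i)$ and the final convexity/endpoint argument are straightforward calculations.
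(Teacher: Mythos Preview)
The paper does not supply its own proof of this theorem: it is stated in Section~\ref{ss0} as one of the ``classical tools'' and attributed to Erd\H{o}s~\cite{Erdos}, with no argument given. So there is nothing in the paper to compare against.

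That said, your argument is correct and is essentially the standard proof. Passing to the closure, extracting an index $i$ from an extremal nonadjacent pair to obtain the Chv\'atal-type inequalities $d_i\le i$ and $d_{n-i}\le n-i-1$, summing degrees blockwise to get $e(H)\le h(n,i)$, and then finishing by convexity of $h(n,\cdot)$ on $[d,\lfloor(n-1)/2\rfloor]$ is exactly the line Erd\H{o}s (and later Bondy) follows. One tiny simplification: you derive $i\ge d$ via $d_i\ge\delta(H)\ge d$, but more directly $i=d(u)\ge\delta(H)\ge d$ already gives it. The algebraic identity $i^2+(n-2i)(n-i-1)+i(n-1)=2h(n,i)$ and the second-difference computation for convexity both check out.
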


\subsection{A lemma for nonhamiltonian graphs}\label{ss11}

The lemma below follows from a result of Voss~\cite{Voss} (and from the even more detailed descriptions by Jung~\cite{Jung} and Jung, Nara~\cite{JungNara}). We only state and use a weaker version and for completeness include a short proof. Define five classes of nonhamiltonian graphs.

${}$\enskip --- \enskip Let $n=2k+2$, $V=V_1\cup V_2$, $|V_1|=|V_2|=k+1$, ($V_1\cap V_2=\emptyset$). We say that $G\in \cG_1$ 
 if its edge set is the union of two complete graphs with vertex sets $V_1$ and $V_2$ and it contains at most one further edge $e_0$ (joining $V_1$ and $V_2$);
  \newline
 ${}$\enskip --- \enskip Let $n=2k+1$, $V=V_1\cup V_2$, $|V_1|=|V_2|=k+1$, $V_1\cap V_2=\{ x_0\}$. We say that $G\in \cG_2$
 if its edge set is the union of two complete graphs with vertex sets $V_1$ and $V_2$; 
\newline
${}$\enskip --- \enskip Let $n=2k+2$, $V=V_1\cup V_2$, $|V_1|=k+1$, $|V_2|=k+2$, $V_1\cap V_2=\{ x_0\}$. We say that $G\in \cG_3$
 if its edge set is the union of a complete graph with vertex set $V_1$ 
  and a $2$-connected graph $G_2$ with vertex set $V_2$ such that $\deg_G(v)\geq k$ for every vertex $v\in V$;
  \newline
${}$\enskip --- \enskip Let $n=2k+1$, $V=V_1\cup V_2$, $|V_1|=k$, $|V_2|=k+1$, ($V_1\cap V_2=\emptyset$).  We say that $G\in \cG_4$ 
 if $V_2$ is an independent set, and its edge set contains all edges joining $V_1$ and $V_2$;
\newline
${}$\enskip --- \enskip Let $n=2k+2$, $V=V_1\cup V_2$, $|V_1|=k$, $|V_2|=k+2$, ($V_1\cap V_2=\emptyset$).  We say that $G\in \cG_5$ if 
 $V_2$ contains at most one edge $e_0$ and $\deg_G(v)\geq k$ for every vertex $v\in V$ (so its edge set contains all but at most two edges joining $V_1$ and $V_2$).

\begin{lem}\label{5G}Let $k\geq 3$ be an integer, $n \in \{ 2k+1, 2k+2\} $.
Suppose that $G$ is an $n$-vertex nonhamiltonian graph with $\delta(G) \geq k = \lfloor(n-1)/2\rfloor$, $V:= V(G)$.
Then $G\in \cG_1\cup \dots \cup \cG_5$.  
\end{lem}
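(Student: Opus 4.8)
\textbf{Proof plan for Lemma~\ref{5G}.}

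The plan is to exploit the tension between the high minimum degree $\delta(G)\ge k=\lfloor (n-1)/2\rfloor$ and the failure of hamiltonicity, which by classical arguments (Chv\'atal--Erd\H os or an Ore-type argument) forces $G$ to be either disconnected, or to have a small cutset, or to be unbalanced bipartite-like. First I would pass to the hamilton-closure $C(G)$ from Theorem~\ref{BCthm}: since $G$ is nonhamiltonian, so is $C(G)$, and $C(G)$ still has $\delta\ge k$, so it suffices to classify $C(G)$ (each $\cG_i$ is closed under removing a controlled number of edges, so a structural statement about $C(G)$ descends to $G$ with the prescribed slack of ``at most one extra edge'' or ``all but at most two edges'' in the definitions of $\cG_1,\cG_5$). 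In the closure we have the extra leverage that any two nonadjacent vertices have degree-sum at most $n-1$; combined with $\delta\ge k$, for $n=2k+1$ this means nonadjacent vertices $u,v$ satisfy $d(u)+d(v)\le 2k$, forcing $d(u)=d(v)=k$, and similarly tight constraints when $n=2k+2$.

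Next I would split into cases according to the connectivity of $G$ (equivalently of $C(G)$). If $G$ is disconnected, then since $\delta\ge k$ every component has at least $k+1$ vertices, so with $n\le 2k+2$ there are exactly two components, each of order $k+1$ (when $n=2k+2$) — and then minimum degree $k$ inside a $(k+1)$-vertex component forces it to be complete, landing us in $\cG_1$ (with $e_0$ absent); the case $n=2k+1$ is impossible for a disconnected graph with $\delta\ge k$. If $G$ is connected but has a cutvertex $x_0$: removing $x_0$ leaves components each of order $\ge k$ (each non-cutvertex neighbor count forces size $\ge k$), so there are exactly two pieces and, restoring $x_0$, the two ``blocks'' have orders summing to $n+1$ with each $\ge k+1$; pushing the degree condition again makes the smaller block complete (it has $k+1$ vertices) and gives $\cG_2$ or $\cG_3$ depending on parity (in $\cG_3$ the larger side need only be $2$-connected with $\delta\ge k$, which is exactly what survives). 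The remaining case is $G$ $2$-connected; here I would invoke the Chv\'atal--Erd\H os-type obstruction: a $2$-connected nonhamiltonian graph has an independent set $S$ with $|S|>\kappa(G)\ge$ (something), and more usefully, by a direct rotation-extension / Bondy-type argument one shows there is a ``large independent set on one side.'' Concretely, take a longest cycle $C$; it is not spanning, pick $w\notin C$; $w$ has $\ge k$ neighbors on $C$, and the standard argument shows the $\ge k$ successors of $N(w)$ on $C$, together with $w$, form an independent set $V_2$ of size $\ge k+1$, while $V_1:=V\setminus V_2$ has size $\le k$; then every vertex of $V_2$ has all $\ge k$ of its neighbors in $V_1$, so $|V_1|\ge k$, hence $|V_1|=k$ and we are in $\cG_4$ ($n=2k+1$, $V_2$ independent, complete bipartite between) or $\cG_5$ ($n=2k+2$, $|V_2|=k+2$, each vertex degree $\ge k$ forcing $V_2$ to carry at most one edge and the bipartite graph to be nearly complete).

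The main obstacle I anticipate is the $2$-connected case: making the ``independent side $V_2$'' argument fully rigorous requires the careful rotation/crossing-chords analysis (showing that two consecutive elements of $N(w)$ on $C$ cannot both be adjacent to $w$-successors in a way that creates a longer cycle, and that $w$ is nonadjacent to all those successors), and then separately verifying the degree lower bound $\deg_G(v)\ge k$ for $v\in V_1$ in the $\cG_5$ case and that $V_2$ spans at most one edge — this is where the parity split $n=2k+1$ versus $n=2k+2$ does real work, since in the even case a single edge inside $V_2$ is unavoidable (that extra unit of room is exactly why $\cG_3$ and $\cG_5$ are phrased with a $2$-connected/degree condition rather than as explicit complete-plus-edge descriptions). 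The disconnected and cutvertex cases, by contrast, are short bookkeeping with the pigeonhole ``every component has $>\delta$ vertices.'' Finally I would note that once $C(G)$ is placed in one of the five families, $G\subseteq C(G)$ automatically lies in the same family, since each $\cG_i$ is defined to tolerate the loss of edges (and none of the defining conditions — completeness of the small side, independence of $V_2$, $2$-connectivity with $\delta\ge k$ — is destroyed by the closure operation in the relevant direction).
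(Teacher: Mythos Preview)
Your broad architecture---split by connectivity, then in the $2$-connected case take a longest cycle and use the standard successor/rotation argument---is exactly what the paper does. Two points, however, deserve correction.

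\medskip
\textbf{The closure detour is both unnecessary and unsafe.} The paper works directly with $G$; passing to $C(G)$ buys nothing here and your descent claim ``$G\subseteq C(G)$ automatically lies in the same family'' is not right as stated. The classes $\cG_i$ are \emph{not} downward-closed under edge deletion: $\cG_2$ asks that the edge set be \emph{exactly} the union of two cliques, and $\cG_3$ requires $G[V_2]$ to be $2$-connected. If $C(G)\in\cG_3$, the closure may well have added edges inside $V_2$, and removing them to recover $G$ can kill $2$-connectivity of $G[V_2]$ (indeed $x_0$ may become isolated in $G[V_2]$). You can patch this by arguing case-by-case that $\delta(G)\ge k$ forces $G=C(G)$ on the relevant parts, but at that point the closure step is doing no work. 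Drop it.

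\medskip
\textbf{The $2$-connected case is missing the key quantitative input.} The paper's first move there is to invoke the second half of Dirac's theorem (Theorem~\ref{th:D}): since $G$ is $2$-connected with $\delta\ge k$, we have $c(G)\ge 2k$, so a longest cycle $C$ misses at most $n-2k\in\{1,2\}$ vertices. This is what makes the analysis finite and clean. Your plan (``successors of $N(w)$ together with $w$ form an independent set $V_2$ of size $\ge k+1$, hence $|V_1|\le k$'') only yields $|V_2|\ge k+1$, which for $n=2k+2$ does not force $|V_2|=k+2$ and so does not land you in $\cG_5$. The paper instead treats $|V(C)|=n-1$ and $|V(C)|=n-2$ separately: in the first case one reads off $\cG_4$ (odd $n$) or locates the single edge $e_0$ inside $V_2$ (even $n$); in the second case ($n=2k+2$, two vertices $x,y$ off $C$) one first shows $xy\notin E(G)$ via a short replacement argument and then obtains the independent set of size $k+2$. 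Your sketch does not account for the second off-cycle vertex at all.

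\medskip
In short: delete the closure paragraph, and in the $2$-connected case start from $c(G)\ge 2k$ and split on whether one or two vertices lie off the longest cycle. With those fixes your outline becomes the paper's proof.
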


\begin{proof}
Suppose first that $G$ is not 2-connected.
Then there exist two blocks $B_1, B_2$ of $G$ (i.e., $B_i$ is a maximal 2-connected subgraph or a $K_2$) which are {\em endblocks}, i.e., for $i=1,2$ there is a vertex $v_i\in B_i$ such that $V(B_i)\setminus \{ v_i\}$ does not meet any other block.
Then $\{v\} \cup N(v) \subset V(B_i)$ for all $v\in V(B_i)\setminus \{ v_i\}$, so
an endblock has at least $k+1$ vertices and if $|V(B_i)|=k+1$ then it is a clique.
If $B_1$ and $B_2$ are disjoint then we get $n=2k+2$, and $G\in \cG_1$.
If $B_1$ and $B_2$ meet, then $G$ has no other blocks, and  $G\in \cG_2\cup \cG_3$.

Suppose now that $G$ is 2-connected. By the second part of Dirac's theorem (Theorem~\ref{th:D}), the length of a longest cycle $C$ of $G$ is at least $2k$. If $|V(C)| = n-1$,  assume $C=v_1 \ldots v_{n-1} v_1$ and $v_n \notin V(C)$.
Then $v_n$ has at least $k$ neighbors in $C$, with no two of them appearing consecutively (otherwise we could extend $C$ to a hamiltonian cycle). Without loss of generality, let $N(v_n) = \{v_1, v_3, \ldots, v_{2k-1}\}$.
If for some $i<j$ such that $v_i, v_j \in N(v_n)$, $v_{i+1}v_{j+1} \in E(G)$, then we obtain the hamiltonian cycle $v_1 v_2 \ldots v_i v_n v_j v_{j-1} \ldots v_{i+1}v_{j+1} v_{j+2} \ldots v_{n-1}v_1$. Therefore the vertices in $C$ of even parity, together with $v_n$, form an independent set. In case of $n=2k+1$ we got $G\in \cG_4$.
If $n=2k+2$ then in the same way we get that $\{v_{2k+1}\}\cup \{ v_2, v_4, \dots, v_{2k-2} \}$ together with $v_n$ is also independent, so the set $\{ v_2, ..., v_{2k-2}\}
 \cup \{ v_{2k}, v_{2k+1}, v_n \}$ contains only the edge $v_{2k}v_{2k+1}$, $G\in \cG_5$.

Finally, consider the case  that $|V(C)| = n-2$, (i.e., $n=2k+2$) and let $x, y \notin V(C)$.
We claim that $xy\notin E(G)$. Indeed, suppose to  the contrary, that  $xy \in E(G)$. Without loss of generality,
 $A:=\{v_1, v_3, \ldots, v_{2k-3}\} \subseteq N(x)$ or $(A\setminus  \{ v_{2k-3}\}) \cup \{ v_{2k-2}\})\subseteq N(x)$. Note that for any $v_i \in N(x)$, $\{v_{i-2}, v_{i-1}, v_{i+1}, v_{i+2}\} \cap N(y) = \emptyset$ (indices are taken modulo $2k$), because we can remove a segment of $C$ with at most 3 vertices and replace it with a segment with at least 4 containing the edge $xy$. 
 This leads to a contradiction because there is not enough room
    on the $2k$-cycle $C$ to distribute the at least $k-1$ vertices of $N(y)-x$. 
    
If $xy \notin E(G)$ then without loss of generality  $N(x) = \{v_1, v_3,\ldots v_{2k-1}\}$. 
Then the set $\{x\} \cup \{v_2, \ldots, v_{2k}\}$ is an independent set. 
If $y v_i \in E(G)$ for some $i \in \{2, 4, \ldots, {2k}\}$, then because $y$ has $k$ neighbors in $C$ and no two of them appear consecutively, $N(y) = \{v_2, v_4, \ldots, v_{2k}\}$, and we obtain a hamiltonian cycle by replacing the segment $v_1v_2v_3v_4$ of $C$ with the path $v_1 x v_3 v_2 y v_4$. 
Therefore $V_2:=\{v_2, v_4, \ldots, v_{2k}\} \cup \{x, y\}$ is an independent set of size $k+2$, and so  $G\in \cG_5$.
\end{proof}

\subsection{A maximality property of the  graphs in   $\cG_1\cup\ldots\cup\cG_5$}

Let $G\in \cG_1\cup \dots \cup \cG_5$ be a graph.
Delete a set of edges $\cA$ from $E(G)$ where $|\cA|\leq 1$ for $G\in \cG_2\cup \cG_3\cup \cG_4$
and  $|\cA|\leq 2$ for $G\in \cG_1\cup \cG_5$.
Then add a set of new edges $\cB$ as defined below:
\newline
${}$\enskip --- \enskip
For $G\in \cG_1$, $|\cB|=2$ and it consists of any two disjoint pairs joining $V_1$ and $V_2$;
\newline
${}$\enskip --- \enskip
for $G\in \cG_2\cup\cG_3$, $|\cB|=1$ and it consists of any pair $x_1x_2$ joining $V_1\setminus \{ x_0\}$ and $V_2\setminus \{ x_0\}$ (here $x_i\in V_i$);
\newline
${}$\enskip --- \enskip
for $G\in \cG_4$, $|\cB|=1$ and it consists of any pair contained in $V_2$;
\newline
${}$\enskip --- \enskip
and for $G\in \cG_5$, $|\cB|=2$ and it consists of any two distinct pairs contained in $V_2$.

\begin{lem}\label{5Gmodified} If $k\geq 6$,
then the graph $\left(E(G)\setminus \cA\right) \cup \cB$ defined by the above process is hamiltonian, except if $G\in \cG_3$,
$x_0$ has exactly two neighbors $x_2$ and $y_2$ in $V_2$, $\cA= \{ x_0y_2\}$, $\cB= \{ x_1x_2\}$, and $G[V_2\setminus \{ x_0\}]$ is either a $K_{k+1}$ or misses only the edge $x_2y_2$.
\end{lem}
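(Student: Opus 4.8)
The plan is to treat the five families $\cG_1,\dots,\cG_5$ separately, since in each case the modified graph $G' := (E(G)\setminus\cA)\cup\cB$ has a very rigid structure: it is essentially two large cliques glued together along a small interface, plus a controlled number of crossing edges. In every case the strategy is the same: after deleting at most one or two edges, each of the two ``halves'' $V_1$ and $V_2$ still induces a graph with at least $\binom{|V_i|}{2}-2$ edges, so by Corollary~\ref{cor1} (applicable since $|V_i|\geq k+1\geq 7\geq 5$) each half is hamiltonian-connected. The new edges in $\cB$ then serve as the ``crossing rungs'' that let us stitch a hamiltonian path of $V_1$ (between the two appropriate endpoints) to a hamiltonian path of $V_2$ to close up a hamiltonian cycle of $G'$. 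The only care needed is to make sure the endpoints demanded of the hamiltonian-connectedness are exactly the endpoints of the edges of $\cB$ that survive, and that deleting $\cA$ has not removed a vertex's only way into its half; minimum-degree $\geq k$ guarantees this.

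Concretely: for $G\in\cG_1$ ($n=2k+2$, $|V_1|=|V_2|=k+1$), the set $\cB$ consists of two disjoint crossing pairs $x_1x_2$ and $y_1y_2$ with $x_i,y_i\in V_i$; since $|\cA|\leq 2$, each $G'[V_i]$ has $\geq\binom{k+1}{2}-2$ edges, hence is hamiltonian-connected, so take a hamiltonian $(x_1,y_1)$-path in $V_1$ and a hamiltonian $(x_2,y_2)$-path in $V_2$ and join them by $x_1x_2$ and $y_1y_2$. For $G\in\cG_2\cup\cG_3$ we have the shared vertex $x_0$; here $|\cA|\leq 1$ and $\cB=\{x_1x_2\}$ with $x_i\in V_i\setminus\{x_0\}$. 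The idea is to route through $x_0$: find a hamiltonian path of $V_1$ from $x_0$ to $x_1$, cross to $x_2$, and find a hamiltonian path of $V_2\setminus\{x_0\}$... but note $x_0$ must be covered exactly once, so more precisely we want a hamiltonian $(x_0,x_1)$-path in $G'[V_1]$ and a hamiltonian $(x_2,x_0)$-path in $G'[V_2]$, closing up with the edge $x_1x_2$. In $\cG_2$, $V_2$ is a clique on $k+1$ vertices minus at most part of $\cA$, so it is hamiltonian-connected and we are done. For $\cG_4$ and $\cG_5$, $V_1$ is a small clean side ($|V_1|=k$) and $V_2$ is a large nearly-independent side that receives all (or all but $\leq2$) crossing edges; the new edges in $\cB$ lie inside $V_2$, and the argument is the ``alternating'' one: weave a hamiltonian cycle alternating between $V_1$ and $V_2$ using the crossing edges as most of the rungs and the one or two edges of $\cB$ inside $V_2$ to absorb the surplus $|V_2|-|V_1|=1$ or $2$.

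The delicate case — and where the stated exception comes from — is $\cG_3$. Here $V_2$ is only required to be $2$-connected with all degrees $\geq k$, not complete, so $G'[V_2]$ need not be hamiltonian-connected and we cannot freely demand a hamiltonian $(x_2,x_0)$-path. The hard part will be analyzing precisely when such a path fails to exist: if $x_0$ has only two neighbors $x_2,y_2$ in $V_2$ and we have deleted $\cA=\{x_0y_2\}$, then in $G'$ the vertex $x_0$ has $x_2$ as its only neighbor inside $V_2$, so any hamiltonian cycle of $G'$ must use the edges $x_1x_0$ and $x_0x_2$ consecutively, which forces a hamiltonian path in $G'[V_2]$ from $x_2$ back to a neighbor of $x_1$ in $V_1$ — feasible unless $G'[V_2\setminus\{x_0\}] = G[V_2\setminus\{x_0\}]$ is itself too constrained, and $|V_2\setminus\{x_0\}|=k+1$ with the stated clique/near-clique structure is exactly the boundary case where the needed Hamilton path does not exist (a clique on $k+1$ vertices has no Hamilton path avoiding using both endpoints we need... one must check the parity/endpoint obstruction carefully). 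The plan is: show that outside this exceptional configuration, $x_0$ has a second neighbor in $V_2$ or $\cA$ can be chosen to avoid $x_0$'s edges, restoring enough freedom to apply Corollary~\ref{cor1} to a clique of order $k+1$ inside $V_2$ and complete the hamiltonian cycle; and in the exceptional configuration, verify directly that no hamiltonian cycle exists (the graph has a vertex $x_0$ of degree $2$ whose two neighbors $x_1\in V_1$ and $x_2\in V_2$ lie in different cliques joined by no other suitable edge). I expect this last direct check to be the only genuinely nontrivial piece of bookkeeping; everything else is an application of Corollary~\ref{cor1} plus edge-counting of the form $\binom{m}{2}-2$.
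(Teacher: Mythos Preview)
Your treatment of $\cG_1$, $\cG_2$, and $\cG_4$ is correct and matches the paper's approach. The real issue is $\cG_3$ (and to a lesser extent $\cG_5$), where your plan has a genuine gap.

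For $G\in\cG_3$ you need a hamiltonian $(x_0,x_2)$-path in $G'[V_2]$, but $G[V_2]$ is only assumed to be $2$-connected with $\deg_G(v)\geq k$ for $v\in V_2\setminus\{x_0\}$; it is \emph{not} a clique, and there is no ``clique of order $k+1$ inside $V_2$'' to which Corollary~\ref{cor1} applies. The edge count of $G'[V_2]$ can be far below $\binom{k+2}{2}-2$, so Corollary~\ref{cor1} is simply unavailable here. You also write that ``$\cA$ can be chosen to avoid $x_0$'s edges'', but $\cA$ is \emph{given}, not chosen, and even when $\cA$ avoids $x_0$ you still have no tool to produce the required hamiltonian path. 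The paper's device is different: it adds a new dummy vertex $x_2'$ adjacent only to $x_0$ and $x_2$, so that a hamiltonian cycle in the augmented graph $G_2$ is forced to use $x_0x_2'x_2$ and hence yields exactly the desired $(x_0,x_2)$-path. Then it applies the Bondy--Chv\'atal closure (Theorem~\ref{BCthm}): vertices of $V_2\setminus\{x_0\}$ have degree at least $k-1$ in $G_2$, and $2(k-1)\geq k+3=|V(G_2)|$, so the closure turns $V_2\setminus\{x_0\}$ into a clique. The closure then fails to be hamiltonian only when $x_0$ is left with a single neighbor, which is precisely the listed exception. You are missing both the dummy-vertex trick and the closure argument.

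The same comment applies to $\cG_5$: your ``alternating'' sketch has to cope with up to four missing cross-edges (two from $K(V_1,V_2)\setminus E(G)$ and up to two from $\cA$) plus a possible edge $e_0$ inside $V_2$, and the two pairs in $\cB$ need not be disjoint. A bare alternating argument does not obviously survive this; the paper again introduces two dummy vertices $z_1,z_2$ encoding the edges of $\cB$ and uses Bondy--Chv\'atal closure, which absorbs all these defects uniformly. Finally, note that the lemma does \emph{not} ask you to prove non-hamiltonicity in the exceptional $\cG_3$ configuration (and your parenthetical justification for it is garbled); the exception is merely carved out, and in the application it is handled separately.
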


\begin{proof}
If $G\in \cG_1$ and we add two disjoint edges $x_1x_2$ and $y_1y_2$ joining $V_1$ and $V_2$ ($x_1, y_1\in V_1$) then to form a hamiltonian cycle
we need an $x_1x_2$ path $P_1$, and a $y_1y_2$ path $P_2$ of length $k$, $V(P_i)=V_i$ and $E(P_i)\subset E(G)\setminus \cA$. Such paths exist because the graph  $G[V_i]\setminus \cA$ has at least $\binom{k+1}{2}-2$ edges, so it satisfies the condition of Corollary~\ref{cor1}.

If $G\in \cG_2\cup \cG_3$ and we add an edge $x_1x_2$ joining $V_1\setminus\{x_0\}$ and $V_2\setminus\{x_0\}$  then
we need paths $P_1$, $P_2$ of length $|V_i|-1$ joining $x_i$ to $x_0$, $V(P_i)=V_i$ and $E(P_i)\subset E(G)\setminus \cA$.
If $G[V_i]\setminus \cA$ satisfies the condition of Corollary~\ref{cor1} then we can find $P_i$.
The only missing case is when $|V_2|=k+2$ (so $G\in \cG_3$).
Let $G_2$ be the graph on $|V_2|+1$ vertices obtained from $G[V_2]\setminus \cA$ by adding a new vertex $x_2'$ and two edges $x_0x_2'$ and $x_2x_2'$.
If $G_2$ has a hamiltonian cycle $C$ then it should contain  $x_0x_2'$ and $x_2x_2'$ so the rest of the edges of $C$ can serve as $P_2$ we are looking for. Consider the hamilton-closure $C(G_2)$ and
apply Theorem~\ref{BCthm} to $G_2$. Since the degrees of $V_2\setminus \{ x_0\}$ in $G_2$ are at least $k-1$ and $2(k-1)\geq k+3= |V(G_2)|$, $C(G_2)$ is a complete graph on $V_2\setminus \{ x_0\}$.
So $C(G_2)$ is hamiltonian unless the only neighbors of $x_0$ in $G_2$ are $x_2$ and $x_2'$. Hence $N_G(x_0)\cap V_2= \{ x_2, y_2\}$ and $\cA=\{ x_0y_2\}$.

The last case is when $G\in \cG_5$, $|\cA|=2$, $\cB= \{ e_1,e_2\}$ (two distinct edges inside $V_2$). (The proofs of the other cases, especially when  $G\in \cG_4$ are easier).
We create a graph $H_0$ from $G$ as follows:
Delete the edge $e_0$ (if it exists), delete the edges of $\cA$ joining $V_1$ and $V_2$,
add two new vertices $z_1, z_2$  to $V_1$ and join $z_i$ to the endpoints of $e_i$.
We obtain the graph $H$ by adding all possible $\binom{k+2}{2}$ pairs from $V_1\cup \{ z_1,z_2\}$ to $H_0$.

If $H$ is hamiltonian then its hamiltonian cycle must use only edges of $H_0$ (because $V_2$ is an independent set of size $k+2$ in $H$).
If the 
 graph $H_0$ is hamiltonian then its hamiltonian cycle must use the two edges of the degree $2$ vertex $z_i$, so $\left(G\setminus (\{ e_0\}\cup \cA)\right) \cup \cB$ is hamiltonian as well. 
So it is sufficient to show that $H$ has a hamiltonian cycle. 

Let $A$ be the graph on $V(H)$ consisting of the edges of $\cA$ joining $V_1$ and $V_2$ together with the (at most) two missing pairs $E(K(V_1, V_2))\setminus E(G)$.    
We will again apply  Theorem~\ref{BCthm} to $H$, so consider the hamilton-closure $C(H)$.
The degree $\deg_H(x)$ of an $x\in V_1$ is $(2k+3)-\deg_A(x)$.
The degree $\deg_H(y)$ of a $y\in V_2$ is at least $|V_1|-\deg_A(y)= k-\deg_A(y)$.
Since $\deg_A(x)+\deg_A(y)\leq |E(A)|+1\leq 5$ we get for $k\geq 6$ that
\[  \deg_H(x)+ \deg_H(y)\geq (3k+3)-\left(\deg_A(x)+\deg_A(y)\right)\geq 3k-2 \geq 2k+4= |V(H)|.
\]
So $C(H)$ contains the complete bipartite graph $K(V_1,V_2)=K_{k,k+2}$. 
Then it is really a simple task to find a hamiltonian cycle in $C(H)$ 
 and therefore $\left(E(G)\setminus \cA\right) \cup \cB$ is hamiltonian.  
\end{proof}

\subsection{Proof of Theorem~\ref{diracH}, reducing the hypergraph to a dense graph}\label{ss33}

 Fix $\cH$ to be an $n$ vertex hypergraph satisfying the minimum degree condition. 
We will find a hamiltonian Berge cycle in $\cH$.

Recall that $H=\partial_2(\cH)$ denote the 2-shadow of $\cH$, a graph on $V=V(\cH)$.
Define a bipartite graph $B:= B[E(\cH), E(H)]$ with parts  $E(\cH)$ and $E(H)$ and with edges $\{ h, xy\}$ where
 a hyperedge $h\in E(\cH)$ is joined to the graph edge $xy\in E(H)$ if $\{ x, y\} \subseteq h$.
In the case of $\{ x,y\} \in \cH$ we consider the edge $xy\in E(H)$ and $\{ x,y\} \in E(\cH)$ as two distinct objects of $B$, so $B$ is indeed a bipartite graph (with $|E(\cH)|+| E(H)|$ vertices and no loops).
Let $M$ be a maximum matching of $B$. So 
$M$ can be considered as a partial injection of maximum size, i.e., a bijection $\phi$ between two subsets $\cM\subseteq E(\cH)$ and $\cE\subseteq E(H)$
  such that $|\cM|=|\cE|$,  $\phi(m)\subseteq m$ for $m\in \cM$ (and $\phi(m_1)\neq \phi(m_2)$ for $m_1\neq m_2$).
Consider the subgraph $G=(V,\cE)$ of $H$.   Then $G$ does not have a hamiltonian cycle, otherwise by replacing the edges of a hamiltonian cycle with their corresponding matched hyperedges in $M$, we obtain a hamiltonian Berge cycle in $\cH$ (with representative vertices in the same order).
In this subsection we are going to prove that
\begin{equation}\label{eq331}
  \delta(G) \geq \lfloor (n-1)/2 \rfloor:=k.
\end{equation}
Since $G$ has no hamiltonian cycle and $k\geq 7$, if~\eqref{eq331} holds, then by Lemma~\ref{5G}, $G\in \cG_1\cup \dots\cup \cG_5$. We will consider this case and prove the remainder of Theorem~\ref{diracH} in the next subsection.

Let $\cH_2:= E(\cH)\cap \partial_2(\cH)$, the set of 2-element edges of $\cH$.
We may assume that among all maximum sized matchings of $B$ the matching $M$ maximizes $|\cM\cap \cH_2|$.
\begin{claim}\label{cl33}
  $\cH_2\subseteq \cM$, $\partial_2(\cM)= E(H)$, and every $m\in E(\cH)\setminus \cM$ induces a complete graph in $G$.
  \end{claim}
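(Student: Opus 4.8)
The plan is to prove the three assertions of the claim in order, exploiting the two maximization properties of $M$: first that $|\cM|$ is maximum among matchings of $B$, and second that among such matchings $|\cM\cap \cH_2|$ is maximized. Throughout I think of $M$ as the partial injection $\phi\colon \cM\to\cE$ with $\phi(m)\subseteq m$, and I recall the standard fact that a matching $M$ in a bipartite graph is maximum iff there is no $M$-augmenting path.

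\textbf{Step 1: $\cH_2\subseteq \cM$.} Suppose some $\{x,y\}\in\cH_2$ is not in $\cM$. The corresponding vertex $xy\in E(H)$ has a $B$-neighbor (namely the object $\{x,y\}\in E(\cH)$), so if $xy\notin\cE$ then adding the edge $\{\{x,y\},xy\}$ enlarges $M$, contradicting maximality. Hence $xy\in\cE$, say $xy=\phi(m)$ for some $m\in\cM$. Now swap: replace the matching pair $(m,xy)$ by $(\{x,y\},xy)$. This keeps $|\cM|$ the same but increases $|\cM\cap\cH_2|$ by one (we gained $\{x,y\}$; we may have lost $m$, but only if $m\in\cH_2$, in which case $m=\{x,y\}$ as sets, i.e. $m$ and $\{x,y\}$ are the same hyperedge — impossible since $\{x,y\}\notin\cM$ but $m\in\cM$). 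Wait — one must be slightly careful: $m$ could itself be a $2$-set different from $\{x,y\}$ only if $\phi(m)=xy\subseteq m$ forces $m=\{x,y\}$, so in fact $m\notin\cH_2$ unless $m=\{x,y\}$; so the swap strictly increases $|\cM\cap\cH_2|$, contradiction. Therefore $\cH_2\subseteq\cM$.

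\textbf{Step 2: $\partial_2(\cM)=E(H)$.} Take any $xy\in E(H)$; by definition of the $2$-shadow there is a hyperedge $h\in E(\cH)$ with $\{x,y\}\subseteq h$, so $xy$ has a $B$-neighbor and, by maximality of $M$ exactly as in Step 1, $xy\in\cE$. Since $\cE=\phi(\cM)$ and $\phi(m)\subseteq m$ is an edge of $\partial_2(\cM)$ for each $m\in\cM$, we get $xy\in\partial_2(\cM)$. Conversely $\partial_2(\cM)\subseteq\partial_2(\cH)=E(H)$ trivially, giving equality; in particular $\cE=E(H)=E(\partial_2\cM)$ as graphs on $V$.

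\textbf{Step 3: every $m\in E(\cH)\setminus\cM$ spans a clique in $G$.} Fix such an $m$ and take two distinct vertices $u,v\in m$; I must show $uv\in\cE=E(G)$. Since $\{u,v\}\subseteq m$, the pair $uv\in E(H)$ is a $B$-neighbor of $m$. If $uv\notin\cE$, then $m$ (unmatched) and $uv$ (unmatched) are joined in $B$, so $M\cup\{\{m,uv\}\}$ is a larger matching — contradiction. Hence $uv\in\cE$, so $G[m]$ is complete. I expect Step 1 to be the main obstacle, since it is the only place where the secondary maximization of $|\cM\cap\cH_2|$ is used and one has to check the swap does not accidentally destroy another $2$-set; Steps 2 and 3 are direct consequences of the maximality of $|M|$ via the no-augmenting-edge observation.
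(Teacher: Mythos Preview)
Your Steps 1 and 3 are correct and match the paper's reasoning exactly. Step 2, however, contains a real gap.

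You write: ``$xy$ has a $B$-neighbor and, by maximality of $M$ exactly as in Step 1, $xy\in\cE$.'' But this is not how Step 1 worked. In Step 1 the $B$-neighbor of $xy$ was the hyperedge $\{x,y\}$, which was \emph{assumed unmatched}; that is why the single $B$-edge $\{\{x,y\},xy\}$ was augmenting. In Step 2 your $B$-neighbor is an arbitrary hyperedge $h\supseteq\{x,y\}$, which may very well lie in $\cM$. If $h\in\cM$ and $\phi(h)\ne xy$, there is no augmenting edge at $xy$, and your conclusion $xy\in\cE$ does not follow. In fact the stronger assertion $\cE=E(H)$ that you draw at the end of Step 2 need not hold (nothing prevents $|E(\cH)|<|E(H)|$ in principle, and the claim only promises $\partial_2(\cM)=E(H)$, not $\cE=E(H)$).

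The fix is immediate and is exactly what the paper does: split into cases. If $xy\in\cE$ then $xy=\phi(m)\subseteq m$ for some $m\in\cM$, so $xy\in\partial_2(\cM)$. If $xy\notin\cE$, pick any $h\in E(\cH)$ with $\{x,y\}\subseteq h$; were $h\notin\cM$, the pair $\{h,xy\}$ would augment $M$, so $h\in\cM$ and again $xy\in\partial_2(\cM)$. This gives $E(H)\subseteq\partial_2(\cM)$, and the reverse inclusion is trivial. With this correction your argument is essentially identical to the paper's (the paper just proves the three statements in a different order, deriving $\cH_2\subseteq\cM$ last).
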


\begin{proof}
If $m\in E(\cH)$ contains an edge $e\in E(H)\setminus E(G)$ then one can enlarge the matching $M$ by adding $\{m,e\}$ to it, if it is possible.
Since $M$ is maximal, it cannot be enlarged, so $m\in \cM$. This implies the second and the third statements.
We also obtained that if $\{x,y\}\in E(\cH)$ then $xy\in E(G)$, so
  $\phi(m)=xy$ for some $m\in \cM$.  In case of $|m|>2$ we can replace the pair $\{ m, xy\}$ by the pair $\{ \{ x,y\}, xy\}$ in $M$ and the new matching covers more edges from $\cH_2$ than $M$ does (in the graph $B$). So $|m|=2$, all members of $\cH_2$ must belong to $\cM$.
   \end{proof}

To continue the proof of Theorem~\ref{diracH}, let $d:= \delta (G)$, $v\in V$ such that $D:=N_G(v)$, $|D|=d$.
Since $G$ is not hamiltonian, Theorem~\ref{th:D} gives $d\leq k$.
Let $\cH_v$ denote the set of hyperedges of $\cH$ containing the vertex $v$, ($\deg_\cH(v)=|\cH_v|$),
and split it into two parts, $\cH_v= \cD\cup \cL$ where
  $\cD:= \{ e\in E(\cH): v\in e \subseteq \{ v\} \cup D\}$ and $\cL:= \cH_v\setminus \cD$.
Split $\cD$ further into three parts according to the sizes of its edges,
$\cD=\cD^- \cup \cD_2\cup \cD_3$ where $\cD_i:= \{ e\in \cD: |e|=i\}$ (for $i=2,3$) and $\cD^-:= \cD\setminus \left(\cD_2\cup \cD_3\right)$.
Since $\cD$ can have at most $2^d$ members and we handle $\cD_2$ and $\cD_3$ separately we get
\begin{equation}\label{eq333}
|\cD| \leq 2^d-d-\binom{d}{2} +|\cD_2|+ |\cD_3|.
  \end{equation}

Recall that the matching $M$ in the bipartite graph $B$ can be considered as a bijection $\phi: \cM \to \cE$,  where $\cM\subseteq E(\cH)$ and $\cE\subseteq E(H)$.
Define another matching $M_2$ in $B$ by an injection $\phi_2: \cD_2\cup \cD_3\to E(G)$ as follows.
If $m\in \cM \cap (\cD_2\cup \cD_3)$ then $\phi_2(m):= \phi(m)$. In particular, since $\cD_2\subseteq \cM$, if $\{ v,x\} \in \cD_2$ then $\phi_2(\{ v,x\})=  vx$.
If $m= \{ v,x,y\} \in \cD_3 \setminus \cM$ then let $\phi_2(m):= xy$.
Since $\phi_2(\cD_2\cup \cD_3)\subseteq E(G)$ we can apply Theorem~\ref{matroid} to the matchings $M$ and $M_2$ in $B$ with $X_1:= \cM$, $Y_1:= E(G)$, and $X_2:= \cD_2\cup \cD_3$. So there exists a subfamily $\cL_3\subseteq \cH \setminus (\cD_2\cup \cD_3)$ and a bijection
 $\phi_3: (\cD_2\cup \cD_3 \cup \cL_3)\to E(G)$. The matching $M'$ defined by $\phi_3$ is also a largest matching of $B$.
Every $m\in \cL$ has an element $x\notin D$, so $vx\notin E(G)$. If $m$ is not matched in $M'$, then we add $\{m, vx\}$ to $M'$ to get a larger matching. Hence $m \in \cL_3$.
These yield
\begin{equation}\label{eq334}
|\cL|\leq |\cL_3| = e(G)-|\cD_2|-|\cD_3|.  
  \end{equation}
Summing up~\eqref{eq333} and~\eqref{eq334}, then using the lower bound for $|\cH_v|$ and the upper bound
of Theorem~\ref{Erdos} for $e(G)$ we obtain
\begin{equation*}
2^k +1 \leq \deg_\cH(v) \leq 2^d-\binom{d+1}{2} +e(n,d)
  \end{equation*}
The inequality $2^k +1 \leq  2^d-\binom{d+1}{2} +e(n,d)$  does not hold for $n\geq 15$ and $d< k$, e.g., for $(n,k,d) = (16, 7, 6)$, the right hand side is only $64-21+85 = 128$.
This completes the proof of $d=k$.

\subsection{Proof of Theorem~\ref{diracH}, the end}\label{ss34}

We may assume that $G\in \cG_1\cup \dots\cup \cG_5$ by Lemma~\ref{5G},  $\phi$ is a bijection $\phi: \cM \to E(G)$
 with $\phi(m)\subseteq m$ where $\cM\subseteq E(\cH)$, and Claim~\ref{cl33} holds.
Let $\cL_v$ denote the set of edges  $m\in \cH$ containing an edge $vy$ of $E(H)\setminus E(G)$. Note that $\cL_v\subseteq \cM$.
If $\deg_G(v)=k$, then the family $\cL_v$ is non-empty, otherwise $\deg_{\cH}(v) \leq 2^k$. 

Call a graph $F$ with vertex set $V$ a {\em Berge graph of} $\cH$ if $E(F)\subseteq E(H)$, and there exists a subhypergraph $\cF\subseteq \cH$, and a bijection $\psi:\cF \to E(F)$ such that $\psi(m)\subseteq m$ for each $m\in \cF$.
We are looking for a Berge graph of $\cH$ having a hamiltonian cycle.
In particular, the graph $G$ is a Berge graph of $\cH$ and it is almost hamiltonian. We will show that a slight change to $G$ yields a hamiltonian Berge graph of $\cH$.

If $G\in \cG_2\cup \cG_3$ then choose any $v\in V_1\setminus \{ x_0\}$  and let $m\in \cL_v$. There exists an edge $vy\in \left(E(H)\setminus E(G)\right)$ contained in $m$. Then $y\in V_2\setminus \{ x_0\}$. 
The graph $\left(E(G)\setminus \{ \phi(m) \}\right)\cup \{ vy\} $ is a Berge graph of $\cH$ (we map $m$ to the edge $vy$ instead of $\phi(m)$).
According to Lemma~\ref{5Gmodified} (with $\cA:= \{ \phi(m)\}$ and  $\cB:= \{ vy\}$) it is hamiltonian except if we run into the only exceptional case:
$x_0$ has exactly two $G$-neighbors $x_2$ and $y_2$ in $V_2$,  $vy=vx_2$, and $\phi(m)=x_0y_2$.
In this case $m$ contains $\{ x_0,v, x_2, y_2\}$ so it can be avoided by choosing $y:= y_2$ instead of $y=x_2$.

If $G\in \cG_4$ then we argue in a very similar way.
Choose any $v\in V_2$  and let $m\in \cL_v$ containing an edge $vy\in \left(E(H)\setminus E(G)\right)$. Then $y\in V_2$ and
 the graph $\left(E(G)\setminus \{ \phi(m) \}\right)\cup \{ vy\} $ is a Berge graph of $\cH$ that is hamiltonian by Lemma~\ref{5Gmodified} with $\cA:= \{ \phi(m)\}$ and  $\cB:= \{ vy\}$.
From now on we may suppose that $n=2k+2$ so $|\cL_v|\geq 2$ for $\deg_G(v)=k$.

If $G\in \cG_1$ then define $\cM_{1,2}$ as the members of $\cM$ meeting both $V_1$ and $V_2$.
The minimum degree condition on $\cH$ implies that $|\cM_{1,2}|\geq 2$.
Since $\cM_{1,2}$ can have at most one member of size $2$, we can choose an $m_1$, $|m_1|\geq 3$. By symmetry we may suppose that $|m_1\cap V_1|\geq 2$ and let $x_2\in V_2\cap m_1$.
Choose an element $y\in V_2$, $y\notin e_0$, $y\neq x_2$.
Since $|\cL_y|\geq 2$ we can choose an $m_2\in \cM_{1,2}$ such that $m_1\neq m_2$ and $y\in m_2$.
Take any pair $\{ y_1,y\}\subseteq m_2$ with $y_1\in V_1$.  Then one can choose an $x_1\in m_1\cap V_1$ so that $x_1\neq x_2$. So the pairs $\{ x_1, x_2\}\subseteq m_1$ and $\{ y_1, y\}\subseteq m_2$ are disjoint.
Lemma~\ref{5Gmodified} with $\cA:= \{ \phi(m_1), \phi(m_2)\}$ and $\cB:= \{ x_1x_2, y_1y\}$ implies that the graph
$\left(E(G)\setminus \cA \right)\cup \cB $ is a hamiltonian Berge graph of $\cH$.

If $G\in \cG_5$ then $|\cL_v|\geq 2$ for any $v\in V_2\setminus e_0$ and for all members $m$ of $\cL_v$ we have $|m\cap V_2|\geq 2$.
Fix $v\in V_2\setminus e_0$ and let $m_1$ be an arbitrary member of $\cL_v$. Choose a pair $\{ v,v'\} \subseteq m_1\cap V_2$.
Fix another vertex $u\in  V_2\setminus (e_0\cup \{ v,v'\})$ and let $m_2$ be an arbitrary member of $\cL_u$. Choose a pair $\{ u,u'\} \subseteq m_2\cap V_2$. Then $u\notin \{ v,v'\}$ so the pairs  $\{ u,u'\}$ and $\{ v,v'\} $ are distinct. Again, apply Lemma~\ref{5Gmodified} with $\cA:= \{ \phi(m_1), \phi(m_2)\}$ and $\cB:= \{ uu', vv'\}$. This completes the proof of Theorem~\ref{diracH}.
\qed

\begin{rem}
We can also show that all extremal examples are slight modifications of the four types of the sharpness examples described after Theorem~\ref{diracH}.
   \end{rem}

\section{Dirac-type conditions for long Berge cycles}\label{secCk}

In this section we prove Theorem~\ref{th:mindeg_path} for Berge paths and Theorem~\ref{th:mindeg_cycle} for Berge cycles. In fact we prove the two statements simultaneously.
\begin{proof}[Proof of Theorems~\ref{th:mindeg_path} and~\ref{th:mindeg_cycle}]
Suppose that $\delta(\cH)\geq 2^{k-2}+1$, $k\geq 3$ and that $\cH$ has no Berge cycle of length $k$ or longer.
We will show that it contains a Berge path of length $k-1$ (thus establishing Theorem~\ref{th:mindeg_path}) and then that $\delta(\cH)= 2^{k-2}+1$ (which completes the proof of Theorem~\ref{th:mindeg_cycle}).

Choose a longest Berge path in $\cH$ according the following rules.
We say that a Berge path 
 with edges $\{e_1,\dots,e_s\}$ is \emph{better} than a Berge path 
 with edges $\{f_1,\dots,f_t\}$ if
\newline\indent${}$\quad
a) $s>t$  or
\newline\indent${}$\quad
b) $s=t$ and $\sum |e_i|<\sum |f_j|$.

Consider a best Berge path $\cP$ in $\cH$.
Let the base vertices of the path be $v_1, v_2, \dots,  v_p$.
Let $e_1, \dots ,e_{p-1}$ be the edges of the path ($v_{i}, v_{i+1}\in e_i$).
First, we show that $p\geq k-1$.
(In fact, $p\geq k$ follows but that will be proved later).

Indeed, let $\cH^{(p)}$ be the hypergraph consisting of the edges of $\cH$
 containing $v_p$, contained in $\{ v_1, \dots, v_p \}$ and also the edges of the path, i.e.,
\[
 E(\cH^{(p)}):=\{ e\in E(\cH): v_p\in e\subseteq \{ v_1, \dots, v_p\}\} \cup \{e_1, \dots ,e_{p-1} \}.
 \]
Then for $p\leq k-2$ (and $k\geq 3$) we have
\[
|E(\cH^{(p)})|\leq 2^{p-1}+ (p-1)\leq 2^{k-2}< \delta(\cH)\leq \deg_\cH(v_p).
\]
So there exits an edge $f$ in $E(\cH)\setminus E(\cH^{(p)})$ containing $v_p$.
Then  $e_1, \dots ,e_{p-1},f$ form a Berge path longer than $\cP$, a contradiction.

Now we have $p\geq k-1$, so we can define $W:=\{ v_1, \dots, v_{k-1}\}$. Let $\cP_1$ be the subhypergraph consisting of the first $k-1$  edges of $\cP$,
  $E(\cP_1):= \{ e_1, ..., e_{k-1}\}$
  (if $p=k-1$ we take $\cP_1:=\cP$).
Let $\cH_1$ be the subhypergraph of $\cH$ consisting of the edges incident to $v_1$.

\begin{claim}\label{cl:52}
Every edge $f\in E(\cH_1)\setminus E(\cP_1)$ is contained in $W:=\{ v_1, \dots, v_{k-1}\}$.
\end{claim}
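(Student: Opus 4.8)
The plan is to argue by contradiction using the ``better Berge path'' extremality of $\cP$, exactly in the spirit of the proof that $p \geq k-1$. Suppose some edge $f \in E(\cH_1) \setminus E(\cP_1)$ is not contained in $W = \{v_1, \dots, v_{k-1}\}$, so $f$ contains a vertex $u \notin W$. Since $f$ is incident to $v_1$, we have $v_1 \in f$ and $u \in f$ with $u \neq v_1$, so the pair $\{v_1, u\}$ is covered by $f$. The idea is to prepend $f$ to the front of (a truncation of) $\cP$ to build a better Berge path, contradicting the choice of $\cP$.

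The key case distinction is whether $u$ is one of the later base vertices $v_j$ with $j \geq k$, or $u$ is entirely outside $\{v_1, \dots, v_p\}$. If $u \notin \{v_1, \dots, v_p\}$, then $u, v_1, v_2, \dots, v_p$ together with the edges $f, e_1, \dots, e_{p-1}$ form a Berge path of length $p$ using $p$ edges — but this has the same number of edges as $\cP$ only if we have been careful; in fact it has \emph{more} base vertices, hence strictly more edges would be needed, so this already contradicts maximality of $p$ (part (a) of ``better''). Wait — more precisely, $f, e_1, \dots, e_{p-1}$ is a sequence of $p$ distinct edges (distinct since $f \notin E(\cP_1) \supseteq$ the relevant $e_i$'s, and we must check $f \neq e_i$ for all $i \leq p-1$; this needs $f \notin E(\cP)$, which follows because $f$ is not among $e_1, \dots, e_{k-1}$ and for $i \geq k$ we can invoke that $f$, being incident to $v_1$ and containing $u \notin W$, cannot be an edge of $\cP$ beyond position $k-1$ without itself giving a longer path — this is the point to be careful about). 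Then $u, v_1, \dots, v_p$ is a Berge path with $p+1$ base vertices, contradicting that $\cP$ was longest.

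If instead $u = v_j$ for some $j$ with $k \leq j \leq p$, then $f, e_1, \dots, e_{j-1}$ together with base vertices $v_j, v_1, v_2, \dots, v_{j-1}$ — no wait, that reuses $v_j$. Instead one forms the Berge \emph{cycle}: base vertices $v_1, \dots, v_j$ with edges $e_1, \dots, e_{j-1}, f$ (using $f$ to close up from $v_j$ back to $v_1$, since $\{v_j, v_1\} = \{u, v_1\} \subseteq f$). These are $j$ distinct edges on $j \geq k$ distinct vertices, so this is a Berge cycle of length $j \geq k$ in $\cH$, contradicting the assumption that $\cH$ has no Berge cycle of length $k$ or longer. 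The one thing to verify here is again that $f$ is distinct from $e_1, \dots, e_{j-1}$; this holds because $f \notin E(\cP_1)$ rules out $f \in \{e_1, \dots, e_{k-1}\}$, and if $f = e_i$ for some $i$ with $k \leq i \leq j-1$ then $e_i \ni v_1$, which would let us shortcut the path $\cP$ at its start (replace $e_1, \dots, e_i$ by the single edge $e_i = f$ covering $\{v_1, v_{i+1}\}$, yielding a Berge path on base vertices $v_1, v_{i+1}, \dots, v_p$ with fewer edges — contradicting either maximality of length or, after re-extending, part (a); more simply, $v_1, v_{i+1}, v_{i+2}, \dots, v_p$ with edges $e_i, e_{i+1}, \dots, e_{p-1}$ is a Berge path, and since $i \geq k \geq 2$ it is shorter, and one can then re-extend it from the $v_1$ end — this is the only mildly delicate bookkeeping step).

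The main obstacle, then, is not any real difficulty but rather the careful handling of edge-distinctness: making sure that the edge $f$ we prepend is genuinely not already one of the edges $e_1, \dots, e_p$ used by $\cP$, and that in the cycle-formation case the resulting cycle honestly uses $j \geq k$ distinct edges. Both are handled by combining $f \notin E(\cP_1)$ (given) with the observation that an edge incident to $v_1$ appearing later in $\cP$ would allow a ``shortcut'' contradicting the extremal choice of $\cP$. Once these bookkeeping points are dispatched, Claim~\ref{cl:52} follows immediately in each case.
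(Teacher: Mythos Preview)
Your overall approach matches the paper's: split into the case where $f$ meets $\{v_k,\dots,v_p\}$ (close a long Berge cycle) versus the case where $f$ reaches outside $\{v_1,\dots,v_p\}$ (extend $\cP$). The substantive issue is the edge-distinctness check, which you flag but do not resolve correctly.

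Your ``shortcutting'' argument does not work. If $f=e_i$ for some $i\geq k$, then the path $v_1,v_{i+1},\dots,v_p$ you build is \emph{shorter} than $\cP$, and ``re-extending from the $v_1$ end'' only recovers $v_i,v_{i-1},\dots,v_1,v_{i+1},\dots,v_p$, i.e.\ a reordering of the same base vertices with the same edge set---not a better path. The clean fix is the paper's: in the cycle case, take the \emph{minimal} index $j\geq k$ with $v_j\in f$; then for $k\leq \ell<j$ one has $v_\ell\notin f$ while $v_\ell\in e_\ell$, so $f\neq e_\ell$ automatically. (Equivalently, note that if $f=e_i$ with $i\geq k$ then $v_1,v_i\in e_i$, so $e_1,\dots,e_{i-1},e_i$ already form a Berge cycle of length $i\geq k$---so this situation never survives.) With that case settled first, any remaining $f$ is disjoint from $\{v_k,\dots,v_p\}$, hence $f\neq e_\ell$ for $\ell\geq k$ trivially, and your Case~1 extension goes through. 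Also, your case split should be on whether $f\cap\{v_k,\dots,v_p\}\neq\emptyset$, not on one chosen $u$: otherwise you can land in ``Case~1'' while $f$ still equals some $e_i$, $i\geq k$.
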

\begin{proof}
First, we show that every edge $f\in E(\cH_1)\setminus E(\cP_1)$ avoids $\{v_k, \dots, v_p\}$.
Otherwise, if there exists an edge $f\in E(\cH_1)\setminus E(\cP_1)$ such that
  $f\cap \{v_k, \dots, v_p\} \neq \emptyset$, then suppose that
  $v_i$ has the minimum index ($k\leq i\leq p $) such that $v_i$ is a vertex of such an $f$.
Then $e_1, \dots, e_{i-1}$ and $f$ are forming a Berge cycle of length $i$, since these hyperedges are all distinct and $v_1,v_i\in f$.
Finally, suppose that there is an edge  $f\in E(\cH_1)\setminus E(\cP_1)$
  such that $v\in f$, $v\notin W$.
Then $v\notin \{ v_1, \dots, v_p\}$ so the path $f, e_1, \dots, v_p$ is longer than $\cP$, a contradiction.
\end{proof}

Let $\cK$ be the family of all $2^{k-2}$ subsets of $W$ that  contain $v_1$. We claim there is a one-to-one mapping $\varphi$ from $\cH_1 \setminus e_{k-1}$ to $\cK$. The existence of such a $\varphi$ implies
\begin{equation}\label{eq41}
  \delta(\cH)\leq \deg_\cH(v_1) \leq 2^{k-2} + 1.
\end{equation}

If an edge $e$ of $\cH_1$ satisfies $e \subseteq W$, then let $\varphi(e) = e$. Otherwise, let $\cA \subseteq \cH_1$  be the set of the edges of $\cH \setminus \{e_{k-1}\}$ that contain both $v_1$ and some vertex outside of $W$. By Claim~\ref{cl:52}, each $e \in \cA$ must be some edge $e_i$ in $\cP_1$. Hence it remains to show that all elements of $\cA$ can be mapped to  distinct elements of $\cK$ that are not  edges of $\cH$.

Observe that if $e_i\in \cA$ then $\{v_i,v_{i+1}\}\notin \cH$.
Otherwise, we get a better path by replacing $e_i$ by $\{v_i,v_{i+1}\}$.
Also, for $1\leq i\leq k-2$, $e_i\in \cA$ implies $v_1\in e_i$ and $\{v_i,v_{i+1}\}\subset e_i$.
Since $e_i\not \subset W$ we get $|e_i|\geq 4$ for $i\geq 2$.
We also obtain that in case of $i\geq 3$, $e_i\in \cA$ we have $\{v_1, v_i,v_{i+1}\}\notin \cP $, and moreover $\{v_1,v_i,v_{i+1}\} \notin \cH$ since otherwise we get a better path by replacing $e_i$ by $\{v_1, v_i,v_{i+1}\}$.
For $3\leq i\leq k-2$ (and $e_i\in \cA$) define $\varphi(e_i)$ as $\{v_1, v_i,v_{i+1}\}$.

If $e_2\in \cA$  and $\{ v_1, v_2, v_3 \}\not \in \cH$ then
 we proceed as above,  $\varphi(e_2):=\{ v_1, v_2, v_3 \}$. Otherwise, if $e_2\in \cA$ (so $|e_2|\geq 4$) and $\{ v_1, v_2, v_3 \}\in \cH$ then
 $\{ v_1, v_2, v_3 \}\in \cP$ too (otherwise, we get a better path by replacing $e_2$ by $\{v_1, v_2,v_3\}$).
We get $e_1=\{ v_1, v_2, v_3 \}$ (and $e_1 \subset e_2$).
We claim that $\{ v_1, v_3\}\notin \cH$.
Otherwise we rearrange the base vertices of the path $\cP$ by exchanging $v_1$ and $v_2$ (and get the order $v_2, v_1, v_3, \dots, v_{p}$) and observe that the Berge path  $\{ v_2, v_1, v_3\}, \{ v_1, v_3 \}, e_3, \dots, e_{p-1}$ is better than $\cP$, a contradiction. So in this case $\varphi(e_2):= \{ v_1, v_3 \}$. Finally, if $e_1\in \cA$ then $\varphi(e_1):= \{ v_1, v_2 \}$, and the definition of $\varphi$ is complete.

We have shown that $\deg_\cH(v_1) \leq |\cH_1 \setminus \{e_{k-1}\}| +1 \leq 2^{k-2} +1$. Equality holds, so $v_1 \in e_{k-1}$. In particular $e_{k-1}$ must exist, so $\cP$ was a Berge path of length at least $k-1$.
\end{proof}

\medskip
Our method works for multihypergraphs as well.
If the maximum multiplicity of an edge is $\mu$, then the corresponding necessary bounds on the minimum degrees are $\mu 2^{k-2}+1$ or $\mu 2^{k-2}+2$, respectively.
Indeed, suppose that  $\delta(\cF)\geq \mu 2^{k-2}+1$, $k\geq 3$ and that $\cF$ has no Berge cycle of length $k$ or longer.
Let $\cH$ be the simple hypergraph obtained from $\cF$ by keeping one copy from the multiple edges. We have $\delta(\cH)\geq 2^{k-2}+1$.
Then Theorems~\ref{th:mindeg_path} implies that $\cH$ (and $\cF$ as well) contain a Berge path with $k$ base vertices.

As in the proof of Theorem~\ref{th:mindeg_cycle}, consider a best Berge path $\cP$ in $\cH$ with
 base vertices  $v_1, v_2, \dots,  v_p$ and edges $e_1, \dots ,e_{p-1}$. We have $p\geq k$.
Then~\eqref{eq41} gives $\deg_\cH(v_1)= 2^{k-2} + 1$ and we get
 $\deg_\cH(v_1)= |\cH_1 \setminus \{e_{k-1}\}| +1$.
Since we also obtained $\{v_1, v_{k-1}, v_k\} \subset e_{k-1}$, the multiplicity of $e_{k-1}$ could not exceed $1$. So
$\delta(\cF)$ could not exceed $\mu 2^{k-2}+1$.

\section{Maximum number of edges} 
\noindent{\em Proof of Theorem~\ref{th:EGh}.}\enskip
Suppose that among all $n$-vertex hypergraphs with $c(\cH)<k$ and $e(\cH)$ edges our $\cH$ is chosen so that $\sum_{e \in E(\cH)} |e|$ is minimized.

We claim that $\cH$ is a downset, that is, for any $e \in E(\cH)$ and $e' \subset e$, $e' \in E(\cH)$. 
Indeed, if there exists a set $e'$ and a hypergedge $e$ such that  $e' \subset e$ such that $e' \notin E(\cH)$ and $e \in E(\cH)$, then the hypergraph obtained by replacing $e$ with $e'$ also does not contain a Berge cyle of length $k$ or longer. This contradicts the choice of $\cH$.

Let $H = \partial_2\cH$ be the 2-shadow of $\cH$. Suppose that $H$ contains a cycle $C = v_1 v_2 \ldots v_\ell v_1$. Every edge $v_iv_{i+1}$ of $C$ is contained in a hyperedge of $\cH$. But since $\cH$ is a downset, the hyperedge $\{v_i, v_{i+1}\}$ is also contained in $E(\cH)$. Therefore $\cH$ also contains a (Berge) cycle of length $\ell$. Hence the graph $H$ contains no cycles of length at least $k$.

Let $e_r(\cH)$ be the number of hyperedges of $\cH$ of size $r$. In $H$, every hyperedge $e$ of $\cH$ is represented by a clique of order $|e|$, and so $e_r(\cH)$ is at most the number of cliques of size $r$ in $H$.
Since $c(H)<k$, each hyperedge contains at most $k-1$ vertices.
By Theorem~\ref{cliques},
\[e(\cH) = e_0(\cH)+ e_1(\cH)+ \sum_{r=2}^{k-1} e_r(\cH) \leq 1+ n+ \sum_{r=2}^{k-1}\frac{n-1}{k-2}{k-1 \choose r} = 2+ \frac{n-1}{k-2}\left(2^{k-1}-2\right). \qed \]

\end{document}